\newtheorem{theorem}{Theorem}
\newtheorem*{thm}{Theorem}
\newtheorem{corollary}[theorem]{Corollary}
\newtheorem{lemma}[theorem]{Lemma}
\newtheorem{example}[theorem]{Example}
\newtheorem*{definition}{Definition}
\theoremstyle{remark}
\begin{document}

\title[]{Single radius spherical  cap discrepancy \\via gegenbadly approximable numbers}
\keywords{Spherical cap discrepancy, Gegenbauer Polynomials.}
\subjclass[2020]{52A40, 11K38, 52C35}

\author[]{Dmitriy Bilyk \and Michelle Mastrianni \and Stefan Steinerberger}
\address{School of Mathematics, University of Minnesota, Minneapolis, MN 55455, USA}
\email{dbilyk@umn.edu}

\address{School of Mathematics, University of Minnesota, Minneapolis, MN 55455, USA}
\email{michmast@umn.edu}

\address{Department of Mathematics, University of Washington, Seattle, WA 98195, USA}
\email{steinerb@uw.edu}

\begin{abstract} A celebrated result of Beck shows that for any set of $N$ points on $\mathbb{S}^d$ there always exists a spherical cap $B \subset \mathbb{S}^d$ such that number of points in the cap deviates from the expected value $\sigma(B) \cdot N$ by at least $N^{1/2 - 1/2d}$, where $\sigma$ is the normalized surface measure. We refine the result and show that, when $d \not\equiv 1 ~(\mbox{mod}~4)$, there exists a (small and very specific) set of real numbers such that for every $r>0$ from the set one is always guaranteed to find a spherical cap $C_r$ with the given radius $r$ for which the result holds. The main new ingredient is a generalization of the notion of badly approximable numbers to the setting of Gegenbauer polynomials: these are fixed numbers $ x \in (-1,1)$ such that the sequence of Gegenbauer polynomials $(C_n^{\lambda}(x))_{n=1}^{\infty}$ avoids being close to 0 in a precise quantitative sense.
\end{abstract}

\maketitle

\section{Introduction and Results}

\subsection{Gegenbauer Polynomials}\label{s.GP}  The Gegenbauer polynomials $C_n^{\lambda}:\mathbb{R} \rightarrow \mathbb{R}$ are defined recursively via
$C_0^{\lambda}(x)= 1$, $C_1^{\lambda}(x) = 2\lambda x$ and
\begin{align*}
C_n^{\lambda}(x) = \frac{n+\lambda -1}{n}  2x \cdot C_{n-1}^{\lambda}(x) - \frac{n + 2\lambda - 2}{n} \cdot C_{n-2}^{\lambda}(x).
\end{align*} 
Gegenbauer polynomials are orthogonal on the interval $[-1,1]$ with respect to the weight $w(x) = (1-x^2)^{\lambda-1/2}$. They also satisfy the normalization
$$ \int_{-1}^{1} C_n^{\lambda}(x)^2 (1-x^2)^{\lambda-1/2} dx = \frac{\pi 2^{1-2\lambda}}{\Gamma(\lambda)^2} \frac{\Gamma(n+2\lambda)}{n! (n+\lambda)} = c_{\lambda} \cdot n^{2\lambda - 2} + \mathcal{O}(n^{2\lambda -1}).$$
In particular, we see that for a typical $x \in (-1,1)$ we would expect that, at least on average, $|C_n^{\lambda}(x)| \sim n^{\lambda - 1}$. We note that 
larger growth is possible at the boundary since the weight $w(x) $ decays at $\pm 1$. In particular, $C_n^\lambda (1) 
 \sim n^{2\lambda -1}$.
Our main question is whether there exist suitable $x \in (-1,1)$ such that $|C_n^{\lambda}(x)|$ is always `large'. The
difficulty comes from the oscillatory nature of these polynomials, one needs to ensure that $x$ is always `far' away from a root, where
the notion of `far' shrinks with $n$. We will now formally define these numbers.

\begin{definition}\label{d.ggb} Let $\lambda > 0$. We say that $x \in (-1,1)$ is \emph{gegenbadly approximable} (with respect to $\lambda > 0$) if there exists a constant $c_x > 0$ such that, for all $n \in \mathbb{N}$,
\begin{equation}\label{e.ggb} |C_n^{\lambda}(x)| \geq c_x \cdot n^{\lambda - 1}.
\end{equation}
\end{definition}

We start by explaining our choice of terminology. 
Recall that a real number $\alpha \in \mathbb{R}$ is said to be \textit{badly approximable} if there exists $c_{\alpha} > 0$ such that 
$$ \forall~\frac{p}{q} \in \mathbb{Q} \quad  \qquad \left| \alpha - \frac{p}{q} \right| \geq \frac{c_{\alpha}}{q^2}.$$
The case $\lambda = 1$ essentially reduces to
the classical notion of badly approximable numbers. This is made precise in the next Theorem which also shows that the case where
$\lambda = 2k + 1 \in \mathbb{N}$ is an odd integer is somewhat atypical.

\begin{theorem} \label{thm:1}  If $\lambda$ is an odd integer, no $\lambda-$gegenbadly approximable numbers exist. Moreover, for every $x\in (-1,1)$, there exists a constant $c_x \ge 0$, such that
$$\left| C_n^{\lambda}(x)\right| \le c_x \cdot  n^{\lambda-2} \quad \mbox{holds for infinitely many values} \quad n \in \mathbb N.$$
When $\lambda = 1$ and  $x  \in (-1,1)$, then there exists  $c_x > 0$ such that for all $n\in \mathbb N$
$$  |C_n^{(1)}(x)|  >  \frac{c_x}{n^{}}$$
\emph{if and only if} $x = \cos{\theta}$ where $\theta/\pi$ is a badly approximable number.
\end{theorem}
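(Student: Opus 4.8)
The plan is to analyze the three claims separately, since each requires a different mechanism. For the case $\lambda=1$, I would first recall the explicit formula for the Gegenbauer polynomials at $\lambda = 1$: these are (up to normalization) the Chebyshev polynomials of the second kind, so that with $x = \cos\theta$ we have the clean closed form
\begin{equation*}
C_n^{(1)}(\cos\theta) = \frac{\sin((n+1)\theta)}{\sin\theta}.
\end{equation*}
This reduces the entire question to understanding when $|\sin((n+1)\theta)|$ is bounded below by a constant multiple of $1/n$ (absorbing the fixed factor $\sin\theta$ into the constant, which is harmless for $x\in(-1,1)$). So I would aim to prove the equivalence: $|\sin((n+1)\theta)| \ge c/n$ for all $n$ if and only if $\theta/\pi$ is badly approximable.

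The heart of this equivalence is the standard dictionary between the distance $\|\cdot\|$ to the nearest integer and the sine function. Writing $m = n+1$, note that $|\sin(m\theta)| = |\sin(\pi \cdot m\theta/\pi)| \asymp \| m\theta/\pi \|$, where $\|y\|$ denotes the distance from $y$ to the nearest integer; this comparison holds because $\sin$ vanishes precisely at integer multiples of $\pi$ and is comparable to distance-to-zero locally. Setting $\alpha = \theta/\pi$, the condition $|\sin(m\theta)| \gtrsim 1/m$ for all $m$ becomes $\|m\alpha\| \gtrsim 1/m$ for all $m\in\mathbb N$, i.e. $m\|m\alpha\| \gtrsim 1$ uniformly. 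By the classical theory of continued fractions (equivalently, the definition via $|\alpha - p/q|\ge c/q^2$ rearranged as $q\|q\alpha\| \ge c$), this uniform lower bound on $q\|q\alpha\|$ is exactly the statement that $\alpha = \theta/\pi$ is badly approximable. I would be careful to match the quantifiers in both directions, using that $\|m\alpha\|$ is always achieved by the best rational approximation with denominator $m$, so the per-$m$ bound translates directly to the Diophantine condition.

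For the first two claims, concerning odd integer $\lambda$, the plan is to exploit an explicit factorization or closed form of $C_n^\lambda$ available when $\lambda \in \mathbb N$. When $\lambda$ is a positive integer the weight $(1-x^2)^{\lambda - 1/2}$ is a polynomial times $(1-x^2)^{1/2}$, and $C_n^\lambda(\cos\theta)$ admits a finite trigonometric expansion; for odd $\lambda = 2k+1$ I expect a symmetry or vanishing phenomenon that forces $C_n^\lambda(x)$ to have infinitely many $n$ for which it dips down to size $n^{\lambda - 2}$ rather than staying at the generic scale $n^{\lambda - 1}$. Concretely, I would look for an identity expressing $C_n^\lambda(\cos\theta)$ as a trigonometric polynomial whose leading-order term, of size $n^{\lambda - 1}$, carries an oscillatory factor like $\cos((n+\lambda)\theta - \gamma)$ for some phase $\gamma$ depending on $\lambda$; when $\lambda$ is an odd integer this phase conspires so that, by a pigeonhole/equidistribution argument on the sequence $\{(n+\lambda)\theta\}$, the leading term is forced to be small (of order $n^{\lambda - 2}$ or smaller) infinitely often, regardless of $x$. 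This would simultaneously establish nonexistence of gegenbadly approximable numbers and the stated infinitely-often upper bound.

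The main obstacle, I anticipate, is the odd-integer case: extracting a sharp asymptotic expansion of $C_n^\lambda(\cos\theta)$ with explicit control of the phase, and then arguing that the oscillatory leading term genuinely vanishes to the claimed order infinitely often for \emph{every} $x\in(-1,1)$ (a uniform-in-$x$ statement). The equidistribution argument is delicate when $\theta/\pi$ is rational, where the sequence $\{(n+\lambda)\theta\}$ is periodic rather than equidistributed, so I would need to handle rational and irrational $\theta$ separately, checking that in the rational case the finitely many residues still produce a value near a zero of the relevant cosine. In contrast, the $\lambda = 1$ equivalence should be essentially a clean translation once the Chebyshev closed form and the $\sin$-versus-distance comparison are in place.
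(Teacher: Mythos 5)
Your plan follows the same route as the paper's proof: for $\lambda=1$, the Chebyshev closed form together with the dictionary between $|\sin(m\theta)|$ and the distance $\|m\theta/\pi\|$ to the nearest integer, and for odd integer $\lambda$, an oscillatory asymptotic expansion combined with pigeonhole/equidistribution and a rational/irrational case split. The $\lambda=1$ part of your proposal is correct and essentially complete; it is exactly the paper's argument.

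The odd-integer case, however, contains a genuine gap: the whole theorem hinges on your claim that ``the phase conspires'' when $\lambda$ is odd, and you only conjecture this rather than establish it. The missing ingredients are concrete. First, one needs an actual expansion with controlled error; the paper cites a result of Kalton--Tzafriri, which for $n\sin\theta \ge 1$ gives
$$ C_n^{\lambda}(\cos\theta) = \frac{n^{\lambda-1}}{2^{\lambda-1}\Gamma(\lambda)}\frac{1}{(\sin\theta)^{\lambda}}\cos\!\left((n+\lambda)\theta - \lambda\frac{\pi}{2}\right) + O_{\theta,\lambda}\!\left(n^{\lambda-2}\right), $$
so the phase is exactly $\gamma = \lambda\pi/2$ and the error sits one full power of $n$ below the main term. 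Second --- and this is the crux --- one must see why oddness matters: the cosine is small precisely when $\left\|(n+\lambda)\frac{\theta}{\pi} - \frac{\lambda+1}{2}\right\|$ is small, and when $\lambda$ is an odd integer the shift $\frac{\lambda+1}{2}$ is an integer, so the question becomes whether the orbit $(n+\lambda)\theta/\pi \bmod 1$ (with $n+\lambda$ running over integers, since $\lambda\in\mathbb{Z}$) can stay away from $0$. It cannot, for any $\theta$: if $\theta/\pi = p/q$ is rational, choosing $n$ with $q \mid (n+\lambda)$ makes the main term vanish exactly, so $|C_n^{\lambda}(\cos\theta)| \lesssim n^{\lambda-2}$ from the error term alone; if $\theta/\pi$ is irrational, Dirichlet's theorem gives infinitely many $n$ with $\left\|(n+\lambda)\theta/\pi\right\| \lesssim 1/n$, with the same conclusion. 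By contrast, when $\lambda$ is not an odd integer the relevant quantity is $\left\|(n+\lambda)\frac{\theta}{\pi} - \frac{\lambda+1}{2}\right\|$ with a shift that is nonzero modulo $1$, and a periodic (rational) orbit can avoid it --- which is exactly why gegenbadly approximable numbers exist in those cases. Without pinning down the phase you cannot distinguish odd from even integer $\lambda$, and the statement is false for even integer $\lambda$. Two smaller corrections: your worry about the rational case being delicate is misplaced (once the phase is identified it is the easy case, giving exact vanishing of the main term), and no uniformity in $x$ is required --- the constant $c_x$ in the theorem is allowed to depend on $x$.
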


\begin{center}
\begin{figure}[h!]
\begin{tikzpicture}
\node at (0,0) {\includegraphics[width=0.45\textwidth]{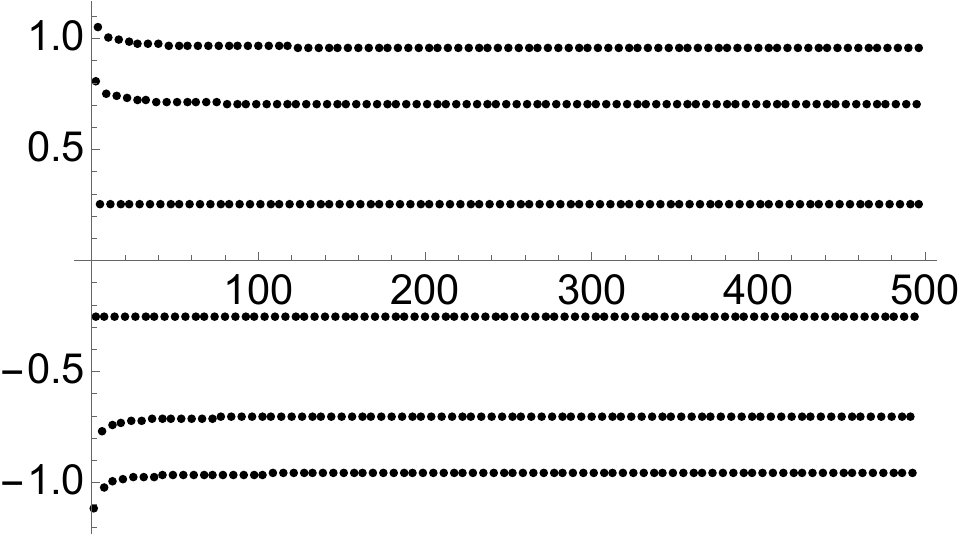}};
\node at (6,0) {\includegraphics[width=0.45\textwidth]{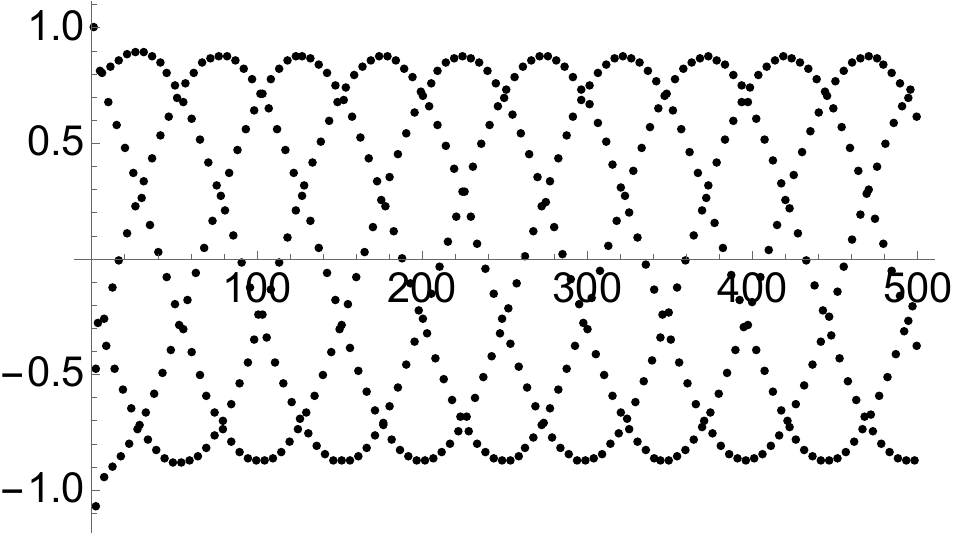}};
\end{tikzpicture}
\caption{Left: the sequence $n^{-1/2} \cdot C_n^{3/2}(1/2)$ for $1\leq n \leq 500$. Right: the sequence $ n^{-1/2} \cdot  C_n^{3/2}(1/3)$ for $1 \leq n \leq 500$. The number $x=1/2$ is $3/2-$gegenbadly approximable, $x=1/3$ is not.}
\end{figure}
\end{center}

\vspace{-10pt}
We will always restrict ourselves to the case where $\lambda > 0$ is not an odd integer.
Our next theorem suggests that gegenbadly approximable numbers do exist for  other values of $\lambda >0$ and  characterizes the set of such numbers. This result also provides a way of explicitly constructing gegenbadly approximable numbers.
\begin{theorem} \label{thm:2} Suppose $0 < \lambda < \infty$ is \emph{not} an odd integer.  If $x \in (-1,1)$ is $\lambda-$\emph{gegenbadly approximable}, then
\begin{enumerate}[(i)]
\item\label{cond1} $x = \cos{(\pi (p/q))}$ where $p/q \in \mathbb{Q}$
\item\label{cond2}  for all $1 \leq n \leq q$
$$ \lambda \left( 2p -q \right) + 2 np -q \qquad \mbox{is not divisible by}~2q.$$
\end{enumerate}
Conversely, for any $x$ satisfying these two properties, there exists $N_{x.\lambda} \in \mathbb{N}$ such that either there exists $n \leq N_{x,\lambda}$ such that $C_n^{\lambda}(x) = 0$ or $x$ is $\lambda-$gegenbadly approximable. Moreover, we have
\begin{equation}\label{e.Nx}
 N_{x,\lambda} \leq  \frac{\pi}{2}   \frac{\lambda |1-\lambda| \cdot \cot(\pi \frac{p}{q})}{ \min_{1 \leq n \leq q} \left\|  (n+\lambda) \frac{p}{q} - \frac{\lambda+1}{2} \right\| },
 \end{equation}
where $\|x\| = \min_{m \in \mathbb{Z}} |x - m|$ is the distance to the nearest integer.
\end{theorem}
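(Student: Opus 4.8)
The plan is to reduce everything to the classical Darboux asymptotic expansion of the Gegenbauer polynomials, which I will use in the normalized form
\[ C_n^{\lambda}(\cos\theta) = A_n(\theta)\Big[\cos\big((n+\lambda)\theta - \tfrac{\lambda\pi}{2}\big) + R_n(\theta)\Big], \qquad A_n(\theta) = \frac{2\,\Gamma(n+\lambda)}{\Gamma(\lambda)\,n!\,(2\sin\theta)^{\lambda}}, \]
valid for $\theta\in(0,\pi)$, where $A_n(\theta)\sim \tfrac{2}{\Gamma(\lambda)}(2\sin\theta)^{-\lambda}n^{\lambda-1}$ and $|R_n(\theta)| = O(n^{-1})$. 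The decisive feature is that the leading phase $\phi_n := (n+\lambda)\theta - \tfrac{\lambda\pi}{2}$ advances by exactly $\theta$ as $n\mapsto n+1$, so the size of $C_n^{\lambda}(\cos\theta)$ is governed by how close $\phi_n$ comes to an odd multiple of $\pi/2$. Writing $\phi_n = \pi(t_n + \tfrac12)$ with $t_n = (n+\lambda)\tfrac{p}{q} - \tfrac{\lambda+1}{2}$, one has $\cos\phi_n = -\sin(\pi t_n)$, hence $|\cos\phi_n| = \sin(\pi\|t_n\|)\in[2\|t_n\|,\pi\|t_n\|]$, which ties the analytic size of the polynomial to the arithmetic quantity $\|t_n\|$ appearing in \eqref{e.Nx}.

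For the two necessity statements I would argue by contraposition using only the leading term. If $\theta/\pi$ is irrational, Weyl's equidistribution theorem shows $\{(n+\lambda)\theta/\pi\}$ is equidistributed modulo $1$, so $\phi_n$ approaches $\pi/2 \pmod\pi$ along a subsequence; there the leading term is $o(n^{\lambda-1})$ while $A_n R_n = O(n^{\lambda-2})$, giving $|C_n^{\lambda}(\cos\theta)| = o(n^{\lambda-1})$, which contradicts \eqref{e.ggb}. This forces $\theta = \pi p/q$, i.e. \eqref{cond1}. For \eqref{cond2}, note $t_{n+q} - t_n = p\in\mathbb Z$, so $t_n \bmod 1$ is $q$-periodic; if the divisibility in \eqref{cond2} held for some $1\le n_0\le q$ — equivalently $t_{n_0}\in\mathbb Z$ and $\cos\phi_{n_0}=0$ — then $\cos\phi_{n_0+kq}=0$ for every $k$, and along this progression the leading term vanishes identically, leaving $|C_n^{\lambda}(\cos\theta)| = O(n^{\lambda-2}) = o(n^{\lambda-1})$, again contradicting \eqref{e.ggb}. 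Hence \eqref{cond1} and \eqref{cond2} are both necessary.

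For the converse, conditions \eqref{cond1}--\eqref{cond2} guarantee $\mu := \min_{1\le n\le q}\|t_n\| > 0$, and $q$-periodicity upgrades this to $\|t_n\|\ge\mu$ for all $n$, so that $|\cos\phi_n| \ge 2\mu$ for every $n$. Thus the leading term of $C_n^{\lambda}(\cos\theta)$ is already bounded below by $\asymp \mu\, n^{\lambda-1}$, and the only way the lower bound can still fail is through the remainder. Here I would use the sharper two-term Darboux expansion, whose correction is proportional to $\cos\big((n+\lambda-1)\theta - \tfrac{(\lambda+1)\pi}{2}\big)$ with amplitude $\asymp \tfrac{\lambda|1-\lambda|}{2(n+\lambda)\sin\theta}A_n(\theta)$; a short trigonometric rearrangement shows that its net effect, to first order, is a phase shift
\[ \phi_n \;\longmapsto\; \phi_n - \frac{\lambda(1-\lambda)}{2(n+\lambda)}\cot\theta, \]
which is exactly where the factors $\lambda|1-\lambda|$ and $\cot(\pi p/q)$ in \eqref{e.Nx} originate. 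This is also the Liouville--Green prediction from the normal form $\ddot u + \big[(n+\lambda)^2 + \tfrac{\lambda(1-\lambda)}{\sin^2\theta}\big]u = 0$ satisfied by $u = (\sin\theta)^{\lambda} C_n^{\lambda}(\cos\theta)$, since $\int \csc^2\theta\,d\theta = -\cot\theta$. Requiring this shift to be smaller than the gap $\pi\|t_n\|\ge\pi\mu$ separating $\phi_n$ from the nearest zero of the cosine yields a threshold $n > N_{x,\lambda}$ of the stated form, beyond which $|C_n^{\lambda}(\cos\theta)|\ge c\,n^{\lambda-1}$. Finally, for the finitely many $n\le N_{x,\lambda}$ there are two cases: if some $C_{n_0}^{\lambda}(\cos\theta)=0$ we are in the first alternative, and otherwise $c_x := \min\big(c,\ \min_{n\le N_{x,\lambda}} |C_n^{\lambda}(\cos\theta)|/n^{\lambda-1}\big) > 0$ certifies that $x$ is $\lambda$-gegenbadly approximable.

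The main obstacle is the last paragraph: converting the qualitative $O(n^{-1})$ remainder into the fully explicit bound \eqref{e.Nx} requires an honest, constant-tracking form of the Darboux expansion with a rigorous estimate on the second remainder term, together with a careful comparison of the induced phase shift against $\|t_n\|$ (rather than against $\mu$ alone) to pin down the constant $\tfrac{\pi}{2}$. By contrast, the necessity directions and the equidistribution/periodicity dichotomy are comparatively routine once the leading asymptotic is in hand.
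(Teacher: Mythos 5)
Your proposal follows the same road map as the paper: the same leading asymptotics (the paper uses the Kalton--Tzafriri estimate \eqref{e.KTz} where you use Darboux, but they play identical roles), the same reduction of the problem to the quantity $\left\| (n+\lambda)\tfrac{p}{q} - \tfrac{\lambda+1}{2} \right\|$, the same equidistribution argument forcing \eqref{cond1}, and the same $q$-periodicity argument forcing \eqref{cond2}. Both necessity arguments in your write-up are complete and correct, and your observation that $|\cos\phi_n| = \sin(\pi\|t_n\|) \geq 2\|t_n\|$ is in fact slightly sharper than the bound $\tfrac{2}{\pi}\|t_n\|$ the paper uses, so the constant $\tfrac{\pi}{2}$ in \eqref{e.Nx} is not an obstacle.

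The genuine gap is the one you name yourself: the explicit bound \eqref{e.Nx} in the converse direction. Your ``two-term Darboux expansion plus first-order phase shift'' is a heuristic, not a proof. A secondary cosine with a shifted phase is not literally a phase perturbation of the main term (it contributes both amplitude and phase modulation), and the Liouville--Green normal form only \emph{predicts} the size of the correction; it gives no rigorous bound on the error of that prediction, so with a qualitative $O(n^{-1})$ remainder you obtain gegenbadly approximability for all $n$ beyond some \emph{unspecified} threshold, but not the explicit $N_{x,\lambda}$. The paper closes exactly this hole by importing the Natanson--Glagovskii expansion \cite{nat}, stated in \eqref{e.NG}: it writes $C_n^{\lambda}(\cos\theta)$ as $\frac{A_n}{(\sin\theta)^{\lambda}}\left( \cos\left((n+\lambda)\theta - \lambda\tfrac{\pi}{2}\right) + \alpha_n(\theta)\,\frac{\lambda(1-\lambda)\cot\theta}{n+\lambda} \right)$ with the clean, non-asymptotic bound $|\alpha_n(\theta)| \leq 1$ valid for \emph{every} $n$. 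With that in hand no phase rearrangement is needed at all: the main term is at least $\tfrac{2}{\pi}X$ with $X = \min_{1\leq n \leq q}\left\|(n+\lambda)\tfrac{p}{q} - \tfrac{\lambda+1}{2}\right\|$, the correction is treated as an additive error of magnitude at most $\frac{\lambda|1-\lambda|\cot\theta}{n+\lambda}$, and demanding that this error be smaller than $\tfrac{2}{\pi}X$ yields \eqref{e.Nx} in one line. So to complete your argument you must replace the WKB heuristic by a citable (or self-contained, constant-tracked) two-term expansion with an explicit remainder bound --- this is precisely the paper's one imported tool, and without it the quantitative half of the theorem does not follow.
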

\textit{Remarks:}
\begin{enumerate}
\item If conditions \eqref{cond1} and \eqref{cond2} are satisfied,  our argument implies the inequality $|C_n^{\lambda}(x)| \geq c_x \cdot  n^{\lambda-1}$ for all $n$ sufficiently large. 
  $N_{x,\lambda}$ in \eqref{e.Nx} quantifies `sufficiently large' to ensure that it holds for all $n$.
\item The result makes it fairly easy to check whether any given number is $\lambda-$gegenbadly approximable with a finite amount of computation.
\item For most real $\lambda > 0$ condition \eqref{cond2} is automatically satisfied since then the number $\lambda \left( 2p -q \right) + 2 np -q$ will usually not even be an integer.
\item The cases that carry the most geometric significance are when the  Gegenbauer polynomials $C_n^{\lambda}$ correspond to a natural rotationally invariant function basis on the sphere $\mathbb S^d$, which happens exactly when $\lambda = (d-1)/2$, i.e. $2\lambda \in \mathbb{N}$ and in these cases, condition \eqref{cond2} is often relevant.

\item Checking the first few values of $n$ can usually not be avoided. For example, it is not difficult to see that the function $\lambda \rightarrow C_{11}^{\lambda}(\cos{(\pi/5)})$ has a sign change around $\lambda \sim 7.918\dots$. The conditions \eqref{cond1} and \eqref{cond2} are satisfied for $\lambda$ close to that number for $x=\cos{(\pi/5)}$ while there exists a $\lambda^*  \sim 7.918$ for which $C_{11}^{\lambda^*}(\cos{(\pi/5)}) = 0$. Thus, while $|C_n^{\lambda^*}(\pi/5)| \geq c \cdot n^{\lambda^*-1}$  is asymptotically true, it fails for $n=11$. We have $N_{\cos{(\pi/5)}, \lambda^*} \sim 1570 \geq 11$. Many other such examples can be constructed. 
\end{enumerate}

We now give a particular example of gegenbadly approximable numbers which will be relevant to applications (see Corollary \ref{cor:6}). 

\begin{example} \label{ex:gegen}
The numbers $\pm 1/2$ and $\pm 1/\sqrt{2}$ are $3/2-$gegenbadly approximable.
\end{example}
This follows from $1/2 = \cos{(\pi/3)}$ and $1/\sqrt{2}  = \cos{(\pi/4)}$. The numbers $N_{x,\lambda}$ are easy to compute in these cases (8 and 9, respectively) which then implies the desired conclusion after checking that the first $N_{x,\lambda}$ terms are not 0.  These two examples are actually quite special: there are only two gegenbadly approximable numbers that are rational. There are only six such numbers whose square is rational.

\begin{corollary}\label{c.quadset}
Assume that  $x \in (-1,1)$ satisfies $x^2 \in \mathbb{Q}$. Then $x$  is  $\lambda-$gegenbadly approximable for some $\lambda > 0$ if and only if 
\begin{equation}\label{e.quadset}
 x \in \left\{ \pm \frac{1}{2}, \pm \frac{1}{\sqrt{2}}, \pm \frac{\sqrt{3}}{2} \right\} .
 \end{equation}
\end{corollary}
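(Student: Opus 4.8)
The plan is to derive both directions from Theorem \ref{thm:2} together with Niven's theorem on rational values of the cosine at rational multiples of $\pi$. Throughout I will use the parity relation $C_n^\lambda(-x) = (-1)^n C_n^\lambda(x)$, which gives $|C_n^\lambda(-x)| = |C_n^\lambda(x)|$; hence $x$ is $\lambda$-gegenbadly approximable if and only if $-x$ is, and the six candidate numbers reduce to the three cases $1/2$, $1/\sqrt2$, $\sqrt3/2$.

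For the necessity direction, suppose $x\in(-1,1)$ with $x^2 \in \mathbb Q$ is $\lambda$-gegenbadly approximable. By Theorem \ref{thm:1} the parameter $\lambda$ cannot be an odd integer, so Theorem \ref{thm:2}\eqref{cond1} applies and gives $x = \cos(\pi p/q)$ with $p/q \in \mathbb Q$. Writing $x^2 = \tfrac12(1 + \cos(2\pi p/q))$, the hypothesis $x^2 \in \mathbb Q$ is equivalent to $\cos(2\pi (p/q)) \in \mathbb Q$. Since $2p/q$ is rational, Niven's theorem forces $\cos(2\pi p/q) \in \{0, \pm\tfrac12, \pm 1\}$, so that $x^2 \in \{0, \tfrac14, \tfrac12, \tfrac34, 1\}$. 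The value $x^2 = 1$ is excluded because $x \in (-1,1)$, and $x^2 = 0$ is excluded because the parity relation gives $C_n^\lambda(0) = 0$ for every odd $n$, so $0$ cannot satisfy \eqref{e.ggb}. This leaves $x^2 \in \{\tfrac14, \tfrac12, \tfrac34\}$, i.e. $x \in \{\pm\tfrac12, \pm 1/\sqrt2, \pm\sqrt3/2\}$.

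For the sufficiency direction, the numbers $\pm 1/2$ and $\pm 1/\sqrt2$ are handled by Example \ref{ex:gegen}. It remains to produce, for $x = \sqrt3/2 = \cos(\pi/6)$, some $\lambda>0$ for which $x$ is gegenbadly approximable; the case $-\sqrt3/2$ then follows by parity. Here $p/q = 1/6$ is already reduced, condition \eqref{cond1} holds, and the quantity in \eqref{cond2} equals $-4\lambda + 2n - 6$. A short congruence check shows this is divisible by $2q = 12$ for some $1 \le n \le 6$ precisely when $\lambda \in \tfrac12\mathbb Z$; hence \eqref{cond2} holds for every $\lambda \notin \tfrac12\mathbb Z$. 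To rule out the vanishing alternative in Theorem \ref{thm:2}, I would use that, for fixed $x$, each $C_n^\lambda(x)$ is a polynomial in $\lambda$ whose leading coefficient is $(2x)^n/n!$ (this follows from the expansion $C_n^\lambda(\cos\theta) = \sum_{k=0}^n \frac{(\lambda)_k(\lambda)_{n-k}}{k!\,(n-k)!}\cos((n-2k)\theta)$ together with a binomial identity). For $x = \sqrt3/2 \ne 0$ this coefficient is nonzero, so $C_n^\lambda(\sqrt3/2)$ vanishes for only finitely many $\lambda$, and the union over all $n$ of these zero sets is countable. Choosing any $\lambda \notin \tfrac12\mathbb Z$ outside this countable set, conditions \eqref{cond1} and \eqref{cond2} hold and $C_n^\lambda(x) \ne 0$ for every $n$; Theorem \ref{thm:2} then yields that $x$ is $\lambda$-gegenbadly approximable.

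The main obstacle is the sufficiency for $\sqrt3/2$: unlike $\pm1/2$ and $\pm1/\sqrt2$, which work at the natural value $\lambda = 3/2$, one checks that $\lambda = 3/2$ fails \eqref{cond2} for $\sqrt3/2$ (the expression vanishes at $n=6$), so no single obvious parameter is available and one must instead argue for the \emph{existence} of a good $\lambda$. The countability argument above accomplishes this cleanly, but it rests on the leading-coefficient computation showing $C_n^\lambda(\sqrt3/2)$ is a nontrivial polynomial in $\lambda$ — the same step that explains why $x=0$ is genuinely exceptional, since there the leading coefficient $(2x)^n/n!$ degenerates.
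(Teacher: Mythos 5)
Your proof is correct, and the necessity half is essentially the paper's argument: the paper invokes the Varona--Lehmer classification of rational $r$ with $\arccos\sqrt{r} \in \pi\mathbb{Q}$, which is exactly the content of your Niven's-theorem-plus-double-angle step, and it likewise discards $r=1$ and $r=0$ --- though the paper rules out $x=0$ by checking that condition \eqref{cond2} fails at $n=1$, whereas your parity observation $C_n^\lambda(0)=0$ for odd $n$ is even more direct and bypasses Theorem \ref{thm:2} entirely. The genuine divergence is in the sufficiency for $x=\pm\sqrt{3}/2$. The paper is constructive: it fixes the specific value $\lambda = 4/3$, computes the bound $N_{\sqrt{3}/2,\,4/3} \approx 21.7$ from \eqref{e.Nx}, and verifies by a finite (numerical) check that the first $22$ values $C_n^{4/3}(\sqrt{3}/2)$ are nonzero. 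You instead argue non-constructively: your congruence computation is right ($-4\lambda + 2n - 6 \in 12\mathbb{Z}$ for some $1\le n\le 6$ iff $\lambda = (n-3-6m)/2$, and $n-3$ runs through a complete residue system mod $6$, so \eqref{cond2} holds precisely for $\lambda \notin \tfrac12\mathbb{Z}$), and your leading-coefficient claim is also right (each $C_n^\lambda(\sqrt{3}/2)$ is a degree-$n$ polynomial in $\lambda$ with leading coefficient $(2x)^n/n! \neq 0$, as one can also see by induction on the defining recurrence), so the $\lambda$-values at which some $C_n^\lambda(\sqrt{3}/2)$ vanishes form a countable set, and any $\lambda>0$ outside this set and outside $\tfrac12\mathbb{Z}$ (hence not an odd integer) satisfies the hypotheses and the nonvanishing alternative of Theorem \ref{thm:2}. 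Your route trades the paper's explicit parameter and effective constant for a computation-free existence argument, and it proves slightly more --- all but countably many $\lambda \notin \tfrac12\mathbb{Z}$ witness that $\sqrt{3}/2$ is gegenbadly approximable --- whereas the paper's route exhibits a concrete $\lambda = 4/3$ for which the property can be certified by checking finitely many terms.
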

The proof of Corollary \ref{c.quadset} is presented in Section \ref{sec.proofcol}. We  remark that the characterization of gegenbadly approximable numbers akin to Theorem \ref{thm:1} could be completed if one could describe the set of values $x = \cos \theta \in (-1,1)$, with $\theta$ a rational multiple of $\pi$,  which are not roots of Gegenbauer polynomials for any $n\in \mathbb N$ for a fixed $\lambda >0$. We are not aware of any relevant results in this direction. It is proved in \cite{render} that for many values of $\lambda>0$, one has $C_n^\lambda (\sqrt{b/c}) \neq 0$  when $b,c \in \mathbb N$ are relatively prime and $b\not\in \{ 1,3\}$. Corollary \ref{c.quadset} shows that this result does not apply to our setting. It may be most natural to restrict oneself to the case $2\lambda \in \mathbb{N}$.

\subsection{Single Radius Spherical  Cap Discrepancy}
We now turn to an interesting application of the results about gegenbadly approximable numbers to discrepancy theory and irregularities of distribution. 
 For  a point $x \in \mathbb{S}^d$ and $t \in [-1,1]$,  the spherical cap $B(x,t) \subset \mathbb{S}^d$  of `height' $t$ centered at $x$ is defined  by
\begin{equation}\label{e.cap}
 B(x,t) = \left\{ y \in \mathbb{S}^d: x \cdot y \geq t \right\}.
 \end{equation}
Given a finite set of points $Z = \left\{z_1, \dots, z_N\right\} \subset \mathbb{S}^d$, one could  argue that this set  is regularly distributed if, with respect to all spherical caps $B(x,t)$, the proportion of points that lies in the spherical cap is close to the volume of the spherical cap, i.e.
$$ \frac1{N} \cdot  \# \left( Z \cap B(x,t)\right) \sim \sigma(B(x,t)),$$
where $\sigma$ is the normalized surface measure on $\mathbb{S}^d$. This motivates the definition of the $L^2$ spherical cap discrepancy as
$$ D_{L^2, {\small \mbox{cap}}}(Z)^2 = \int_{-1}^1 \int_{\mathbb{S}^d} \left| \frac{ \# \left( Z \cap B(x,t)\right) }{N} - \sigma(B(x,t)) \right|^2 d\sigma(x) dt.$$ 
This is the average of the squared difference of the aforementioned quantities over all spherical caps {\it{of all sizes}}.  There is a different way of arriving at this quantity (motivated by sums of pairs of distances), known as the \textit{Stolarsky Invariance Principle} \cite{stol1, stol2} which will not be relevant for our paper (but see \cite{bil2,brauch}). \\

We start by introducing a seminal result of Beck \cite{beck} showing that the discrepancy  of any discrete point set can never be too small: avoiding irregularity is impossible.
\begin{thm}[J. Beck, \cite{beck}] For any set  $Z = \left\{z_1, \dots, z_N\right\} \subset \mathbb{S}^d$, we have
\begin{equation}\label{e.beck}
 \int_{-1}^1 \int_{\mathbb{S}^d} \left| \frac{ \# \left( Z \cap B(x,t)\right) }{N} - \sigma(B(x,t)) \right|^2 d\sigma(x) dt \geq c_d \cdot N^{-1- \frac{1}{d}}.\end{equation}
\end{thm}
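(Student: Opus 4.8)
The plan is to reduce the double integral to a single sum over spherical-harmonic frequencies using the Gegenbauer machinery already set up, and then extract the lower bound from the critical frequency scale $M \asymp N^{1/d}$. First I would fix $t$ and expand the function $x \mapsto D(x,t) := \#(Z\cap B(x,t))/N - \sigma(B(x,t))$ in spherical harmonics on $\mathbb{S}^d$. The degree-zero coefficient vanishes because $\int_{\mathbb{S}^d}\mathbf{1}[x\cdot z_j \geq t]\,d\sigma(x) = \sigma(B(\cdot,t))$ is independent of $j$, so subtracting $\sigma(B(x,t))$ kills the mean. Since $\mathbf{1}[x\cdot\xi\geq t]$ is zonal in $x\cdot\xi$, the Funk--Hecke theorem gives its degree-$n$ projection as $\lambda_n(t)$ times the zonal harmonic, with $\lambda = (d-1)/2$ and
\[
\lambda_n(t) = c_{n,d}\,\frac{1}{C_n^{\lambda}(1)}\int_t^1 C_n^{\lambda}(u)\,(1-u^2)^{\lambda-1/2}\,du .
\]
Applying Parseval in $x$, the addition theorem, and integrating in $t$ then yields
\[
\int_{-1}^1\!\!\int_{\mathbb{S}^d}|D(x,t)|^2\,d\sigma(x)\,dt = \sum_{n=1}^{\infty}\gamma_n W_n,\qquad \gamma_n = \int_{-1}^1 |\lambda_n(t)|^2\,dt \geq 0,
\]
where $W_n = \frac{\dim V_n}{\omega_d}\,\frac{1}{N^2}\sum_{i,j}\frac{C_n^{\lambda}(z_i\cdot z_j)}{C_n^{\lambda}(1)} = \frac{1}{N^2}\sum_k\big|\sum_j Y_{n,k}(z_j)\big|^2 \geq 0$ is manifestly a squared norm.

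Next I would pin down the decay of $\gamma_n$. Using the antiderivative identity $\frac{d}{du}\big[(1-u^2)^{\lambda+1/2}C_{n-1}^{\lambda+1}(u)\big] = -\frac{n(n+2\lambda)}{2\lambda}(1-u^2)^{\lambda-1/2}C_n^{\lambda}(u)$, which one verifies directly from $\frac{d}{du}C_n^{\lambda} = 2\lambda C_{n-1}^{\lambda+1}$ and a contiguous relation, the integral collapses to
\[
\int_t^1 C_n^{\lambda}(u)(1-u^2)^{\lambda-1/2}\,du = \frac{2\lambda}{n(n+2\lambda)}\,(1-t^2)^{\lambda+1/2}\,C_{n-1}^{\lambda+1}(t).
\]
Feeding the normalization $C_n^{\lambda}(1)\asymp n^{2\lambda-1}$ and the $L^2$-estimate $\int_{-1}^1(1-t^2)^{2\lambda+1}|C_{n-1}^{\lambda+1}(t)|^2\,dt \asymp n^{2\lambda}$ (the $\lambda{+}1$ analogue of the normalization in Section \ref{s.GP}, with the bulk $\theta\in[\pi/4,3\pi/4]$ supplying the matching lower bound) gives the clean asymptotic $\gamma_n \asymp n^{-(d+1)}$, whence $\sum_n \gamma_n \dim V_n \asymp \sum_n n^{-2} < \infty$, consistent with finiteness of the left-hand side.

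With the reduction in hand, the lower bound follows by dyadic localization. Choosing $M \asymp N^{1/d}$ and keeping only the band $M < n \leq 2M$, on which $\gamma_n \asymp M^{-(d+1)}$ and $\sum_{M<n\leq 2M}\dim V_n \asymp M^d$, positivity of the dropped terms gives
\[
\int_{-1}^1\!\!\int_{\mathbb{S}^d}|D|^2 \;\geq\; \sum_{M<n\leq 2M}\gamma_n W_n \;\gtrsim\; M^{-(d+1)}\sum_{M<n\leq 2M} W_n .
\]
Thus the whole theorem reduces to the band estimate $\sum_{M<n\leq 2M} W_n \gtrsim M^d/N$, which then yields $\gtrsim M^{-(d+1)}\cdot M^d/N = (MN)^{-1} \asymp N^{-1-1/d}$, exactly \eqref{e.beck}.

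The main obstacle is precisely this band estimate. Writing $\sum_{M<n\leq 2M} W_n = \frac{1}{N^2}\sum_{i,j}K_M(z_i\cdot z_j)$ for the band kernel $K_M$, the diagonal contributes $K_M(1)/N \asymp M^d/N$, which is the desired size; the difficulty is that the off-diagonal terms oscillate and cannot be absorbed by the triangle inequality, since the spectral-band localization $|K_M(\cos\theta)|\lesssim M^d(1+M\theta)^{-(d+1)/2}$ produces row sums of order $M^d\cdot M^{(d-1)/2}$, far exceeding the diagonal. Genuine cancellation must therefore be exploited: at the resolution scale $M\asymp N^{1/d}$ the band carries $\asymp M^d \gtrsim N$ degrees of freedom, so the $N$ point evaluations cannot simultaneously annihilate it, and this is what forces the band energy to retain its diagonal order. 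I expect this step to be the hard part, and would carry it out via Beck's integral-geometric averaging over the cap parameters (equivalently, a linear-programming/positive-definite comparison that bounds the surviving off-diagonal mass), rather than through the Stolarsky route, which we deliberately avoid.
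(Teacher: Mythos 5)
The paper does not actually prove this statement --- it is quoted as Beck's theorem with a citation to \cite{beck} --- so your proposal must stand on its own. Your first two steps are sound and standard: the spherical-harmonic reduction $\int_{-1}^1\int_{\mathbb{S}^d}|D|^2 = \sum_{n\geq 1}\gamma_n W_n$ with $W_n\geq 0$, and the asymptotic $\gamma_n\asymp n^{-(d+1)}$, whose lower bound genuinely uses the integration in $t$ (the square of the oscillating Gegenbauer factor averages out); this is exactly the ``averaging over radii'' that the paper emphasizes as crucial. The gap is the band estimate $\sum_{M<n\leq 2M}W_n\gtrsim M^d/N$, which you correctly identify as the crux but do not prove --- and which, as stated, is \emph{false}. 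By the theorem of Bondarenko--Radchenko--Viazovska on optimal spherical designs, for every large $N$ there is an $N$-point spherical $t$-design with $t\geq c_d N^{1/d}$; for such a set $\sum_j Y_{n,k}(z_j)=0$ for all $1\leq n\leq t$ and all $k$, hence $W_n=0$ on your entire band whenever the implicit constant in $M\asymp N^{1/d}$ is smaller than $c_d/2$. So the degrees-of-freedom heuristic (``$N$ point evaluations cannot annihilate the band'') is not merely unproven: optimal designs do annihilate it. With a sufficiently large constant the band estimate may well hold, but proving that amounts to a quantitative Delsarte-type linear programming bound for designs, which is not easier than Beck's theorem itself.

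The missing idea that closes the gap is to abandon sharp spectral bands and compare against a kernel that is both positive definite \emph{and pointwise nonnegative}, so that off-diagonal terms can be discarded with no cancellation analysis at all. Concretely, let $g=\mathbf{1}_{C_\epsilon}\ast\mathbf{1}_{C_\epsilon}$ be the spherical autocorrelation of a cap $C_\epsilon$ with $\sigma(C_\epsilon)=\frac{1}{2N}$, so $\epsilon\asymp N^{-1/d}$. Its Gegenbauer coefficients $b_n=\bigl|\widehat{\mathbf{1}}_{C_\epsilon}(n)\bigr|^2$ satisfy $b_n\lesssim \epsilon^{d-1}n^{-(d+1)}\lesssim \epsilon^{d-1}\gamma_n$ for all $n\geq 1$ (by the same Rodrigues-type identity you used), while $g\geq 0$ pointwise and $g(1)=\sigma(C_\epsilon)$. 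Therefore
\begin{equation*}
\sum_{n\geq 1}\gamma_n W_n \;\gtrsim\; \epsilon^{-(d-1)}\sum_{n\geq 1}b_n W_n
\;=\;\epsilon^{-(d-1)}\left(\frac{1}{N^2}\sum_{i,j}g(z_i\cdot z_j)-\sigma(C_\epsilon)^2\right)
\;\geq\;\epsilon^{-(d-1)}\left(\frac{g(1)}{N}-\sigma(C_\epsilon)^2\right)
\;\geq\;\frac{\epsilon^{-(d-1)}}{4N^2}\;\asymp\;N^{-1-\frac1d},
\end{equation*}
where the middle inequality keeps only the diagonal $i=j$ and uses $g\geq 0$ on the off-diagonal, and the $n=0$ term $\sigma(C_\epsilon)^2=\frac{1}{4N^2}$ is beaten by the diagonal $\frac{g(1)}{N}=\frac{1}{2N^2}$ precisely because of the choice $\sigma(C_\epsilon)=\frac{1}{2N}$. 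Note that for a spherical design this weighted sum draws its mass from frequencies $n$ above the design strength, which is why this route succeeds where any fixed-band reduction must fail.
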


This result is known to be best possible and has inspired a lot of subsequent work (see 
\cite{aist, bilyk, bil3, et, fer, wag1, wag2}).
It implies, for example, that for any set of $N$ points on $\mathbb{S}^2$ there always exists a spherical cap containing $\sim  N^{1/4}$ more or
less points than expected.  This statement is also asymptotically optimal, up to logarithms.  \\

Averaging over all sizes (radii) was crucial in the proof of Beck's theorem. The phenomenon of  averaging over size (radii, side length, dilations) also appeared in the proofs of discrepancy bounds similar to \eqref{e.beck} in other geometric contexts: rotated rectangles \cite{beck-rect}, disks (balls) in the torus \cite{Montgomery}, convex sets \cite{beck,BT}, and many others. The  example of disks  in the torus \cite{Montgomery}  is particularly interesting as it allowed for averaging over only two values: it implied that there is a disk of radius $1/4$ or $1/2$ with large discrepancy, but it is still absolutely unclear whether only one of these radii is enough (see \cite{beck88} for some results in this direction). We also refer to a recent more general result of Brandolini-Gariboldi-Gigante \cite{bran} that is in a similar spirit. More generally, in all of these geometric settings it remained an open question whether averaging over sizes is necessary, or if large discrepancy is already achieved by  a single size? \\

Our main result is a refinement of  Beck's theorem, which answers the question above in the case of the spherical cap discrepancy. It is enough to consider spherical
caps of a \textit{fixed} size provided that size is gegenbadly approximable.

\begin{theorem}\label{t.main} Let  $d \not \equiv 1 ~\mbox{mod}~4$.
Suppose $t \in (-1,1)$ is $(d+1)/2-$gegenbadly approximable. Then for some $c_{d,t} >0$ and all $Z = \left\{z_1, \dots, z_N\right\} \subset \mathbb{S}^d$ we have
$$  \int_{\mathbb{S}^d} \left| \frac{ \# \left( Z \cap B(x,t)\right) }{N} - \sigma(B(x,t)) \right|^2 d\sigma(x)  \geq c_{d,t} \cdot N^{-1 - \frac{1}{d}}.$$
\end{theorem}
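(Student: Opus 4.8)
The plan is to reduce the single-radius estimate to Beck's averaged inequality \eqref{e.beck} by comparing, frequency-by-frequency, the ``multiplier'' attached to a fixed height $t$ with its average over all heights. Write $\lambda = (d-1)/2$, let $\sigma$ be the normalized measure, and set $\nu = \frac{1}{N}\sum_{j}\delta_{z_j} - \sigma$, so that the discrepancy function is $D_t(x) = \int_{\mathbb{S}^d}\mathbf{1}_{[t,1]}(x\cdot y)\,d\nu(y)$. Expanding the zonal kernel $\mathbf{1}_{[t,1]}(x\cdot y)$ into spherical harmonics via the addition theorem and the Funk--Hecke formula, and using that $\nu$ annihilates constants, I would first establish the Parseval identity
$$\int_{\mathbb{S}^d}|D_t(x)|^2\,d\sigma(x) = \sum_{n=1}^{\infty}|\mu_n(t)|^2\,\|\Pi_n\nu\|_{L^2}^2,$$
where $\Pi_n$ is the projection onto degree-$n$ harmonics and $\mu_n(t)$ is the Funk--Hecke coefficient, proportional to $C_n^\lambda(1)^{-1}\int_t^1 C_n^\lambda(u)(1-u^2)^{\lambda-1/2}\,du$. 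The same computation with the additional integration in $t$ gives $\int_{-1}^1\!\int_{\mathbb{S}^d}|D_s(x)|^2\,d\sigma(x)\,ds = \sum_{n\ge 1}W_n\,\|\Pi_n\nu\|^2$, with $W_n = \int_{-1}^1|\mu_n(s)|^2\,ds$.

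The second step is an explicit evaluation of $\mu_n(t)$. Integrating the Gegenbauer identity $\frac{d}{du}\big[(1-u^2)^{\lambda+1/2}C_{n-1}^{\lambda+1}(u)\big] = -\frac{n(n+2\lambda)}{2\lambda}(1-u^2)^{\lambda-1/2}C_n^\lambda(u)$ yields
$$\int_t^1 C_n^\lambda(u)(1-u^2)^{\lambda-1/2}\,du = \frac{2\lambda}{n(n+2\lambda)}\,(1-t^2)^{\lambda+1/2}\,C_{n-1}^{\lambda+1}(t),$$
so that $\mu_n(t)$ equals, up to an $n$-dependent factor that is \emph{independent of the height}, the quantity $(1-t^2)^{\lambda+1/2}C_{n-1}^{\lambda+1}(t)$. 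This is exactly where the index $(d+1)/2 = \lambda+1$ enters: the height dependence of the $n$-th multiplier is governed by the Gegenbauer polynomial $C_{n-1}^{\lambda+1}$.

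The heart of the argument is then the frequency-by-frequency comparison $|\mu_n(t)|^2 \ge c_{d,t}\,W_n$, with a constant independent of $n$. Since the height-independent factor cancels in the ratio, this reduces to
$$\frac{(1-t^2)^{2\lambda+1}\,C_{n-1}^{\lambda+1}(t)^2}{\int_{-1}^1(1-s^2)^{2\lambda+1}\,C_{n-1}^{\lambda+1}(s)^2\,ds}\ \ge\ c_{d,t}>0 .$$
For the numerator I would invoke that $t$ is $(d+1)/2$-gegenbadly approximable (Definition \ref{d.ggb}), which gives precisely $C_{n-1}^{\lambda+1}(t)^2 \ge c_t^2\,(n-1)^{2\lambda}$; for the denominator I would use the standard bulk asymptotics of Gegenbauer polynomials to show it is $\asymp n^{2\lambda}$ (the boundary layer near $\pm 1$ contributing only $O(n^{-2})$, hence negligible for $\lambda>0$). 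The two estimates match in the power of $n$, so the ratio is bounded below for all large $n$; the finitely many remaining $n$ are handled by noting that gegenbadly approximability forces $C_{n-1}^{\lambda+1}(t)\neq 0$, so every $\mu_n(t)$ is strictly nonzero while $W_n$ is finite.

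Combining the two Parseval identities with this comparison and Beck's inequality \eqref{e.beck} gives
$$\int_{\mathbb{S}^d}|D_t(x)|^2\,d\sigma(x) \ge c_{d,t}\sum_{n\ge 1}W_n\,\|\Pi_n\nu\|^2 = c_{d,t}\int_{-1}^1\!\int_{\mathbb{S}^d}|D_s|^2\,d\sigma\,ds \ge c_{d,t}\,c_d\,N^{-1-1/d}.$$
The main obstacle is precisely the uniform comparison $|\mu_n(t)|^2 \ge c_{d,t}W_n$: it is the gegenbadly approximable hypothesis that prevents $|\mu_n(t)|$ from ever dropping below its averaged scale, whereas without it $C_{n-1}^{\lambda+1}(t)$ could be as small as $n^{\lambda-1}$ (or vanish) for some $n$, destroying the bound at that frequency. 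Correspondingly, the obstruction in Theorem \ref{thm:1} — no such numbers exist when $(d+1)/2$ is an odd integer, i.e. when $d\equiv 1 \bmod 4$ — is exactly what forces the excluded residue class and guarantees the hypothesis is non-vacuous.
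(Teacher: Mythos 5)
Your proposal is correct, and it reaches the conclusion by a genuinely different route than the paper. The computational core is the same: you evaluate the Funk--Hecke coefficient of $\mathbf{1}_{[t,1]}$ through the identity $\int_t^1 C_n^\lambda(u)(1-u^2)^{\lambda-1/2}\,du = \frac{2\lambda}{n(n+2\lambda)}(1-t^2)^{\lambda+1/2}C_{n-1}^{\lambda+1}(t)$, which is exactly the paper's Lemma \ref{lem:schneider} with the Rodrigues constant $-a_n^\lambda/a_{n-1}^{\lambda+1}=2\lambda/\bigl(n(n+2\lambda)\bigr)$ made explicit (your derivative identity is the standard one and your constant is consistent with the paper's), and you exploit the gegenbadly approximable hypothesis in the same way, via $|C_{n-1}^{\lambda+1}(t)|\gtrsim n^{\lambda}$. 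The difference is the outer mechanism that converts coefficient bounds into a discrepancy bound: the paper feeds $|\widehat{f_t}(k,\lambda)|\gtrsim k^{-(\lambda+1)}$ into the Bilyk--Dai theorem, where only the frequencies $k\le c_dN^{1/d}$ matter, while you prove two Parseval identities and the frequency-wise domination $|\mu_n(t)|^2\ge c_{d,t}\,W_n$ and then quote Beck's inequality \eqref{e.beck}. Your comparison is sound: after cancelling the common $t$-independent factor it reduces to $(1-t^2)^{2\lambda+1}C_{n-1}^{\lambda+1}(t)^2\gtrsim n^{2\lambda}$ against $\int_{-1}^1(1-s^2)^{2\lambda+1}C_{n-1}^{\lambda+1}(s)^2\,ds\lesssim n^{2\lambda}$, and for the latter you can bypass the bulk/boundary-layer discussion entirely by noting $(1-s^2)^{2\lambda+1}\le(1-s^2)^{\lambda+1/2}$ and invoking the exact $L^2$ normalization of $C_{n-1}^{\lambda+1}$, which is $\asymp n^{2\lambda}$ for all $n$; your handling of the finitely many small $n$ (nonvanishing of $\mu_n(t)$, forced by the hypothesis, together with finiteness of $W_n$) correctly closes the uniform comparison. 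What your route buys: it needs only the classical theorem of Beck rather than the more recent Bilyk--Dai result, and it yields the stronger structural statement that for gegenbadly approximable $t$ the single-radius discrepancy dominates the all-radii average, $\int_{\mathbb{S}^d}|D_t|^2\,d\sigma \ge c_{d,t}\int_{-1}^1\int_{\mathbb{S}^d}|D_s|^2\,d\sigma\,ds$, for every point set. What the paper's route buys: it is shorter given Bilyk--Dai, and since that theorem inspects only the frequencies $k\lesssim N^{1/d}$, it is in principle more forgiving --- one could tolerate coefficient bounds valid only in restricted ranges or on average, which is relevant to the open questions the paper raises about weakening the gegenbadly approximable hypothesis.
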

Theorem \ref{t.main} can be considered a de-randomization of Beck's result since one layer of averaging is removed. 
The following corollary follows from the above theorem and the fact that $t = 1/2$ and $t = 1/\sqrt{2}$ are $3/2$-gegenbadly approximable (Example \ref{ex:gegen}).
\begin{corollary} \label{cor:6}
For $t \in \{1/2, 1/\sqrt{2}\}$ and any set of $N$ points on $\mathbb{S}^2$, there always exists $x \in \mathbb{S}^2$ such that $$\left| \frac{ \# \left( Z \cap B(x,t)\right) }{N} - \sigma(B(x,t)) \right| \ge c \cdot N^{1/4}.$$
\end{corollary}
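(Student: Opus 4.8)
The plan is to read off the pointwise existence statement directly from the averaged $L^2$ lower bound of Theorem \ref{t.main} by a standard mean-value argument. First I would check the hypotheses. For $d = 2$ we have $d \equiv 2 \pmod 4$, hence in particular $d \not\equiv 1 \pmod 4$, and the relevant index is $(d+1)/2 = 3/2$. By Example \ref{ex:gegen}, both heights $t = 1/2$ and $t = 1/\sqrt{2}$ are $3/2$-gegenbadly approximable, so Theorem \ref{t.main} applies and produces a constant $c_{2,t} > 0$ for which
$$\int_{\mathbb{S}^2} \left| \frac{\#(Z \cap B(x,t))}{N} - \sigma(B(x,t)) \right|^2 d\sigma(x) \geq c_{2,t} \cdot N^{-3/2}$$
holds for every $N$-point set $Z$, where I have used $-1 - 1/d = -3/2$ at $d = 2$.

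Next I would pass from this average to a single favorable center. Since $\sigma$ is rotation invariant and the sphere is homogeneous, the cap measure $\sigma(B(x,t))$ does not depend on $x$; consequently the integrand $x \mapsto |\#(Z \cap B(x,t))/N - \sigma(B(x,t))|^2$ is a piecewise-constant function of $x$ (the count $\#(Z \cap B(x,t))$ takes only finitely many integer values), so it attains its supremum. Because $\sigma$ is a probability measure, this supremum is at least the integral above, and there is therefore a point $x_0 \in \mathbb{S}^2$ with
$$\left| \frac{\#(Z \cap B(x_0,t))}{N} - \sigma(B(x_0,t)) \right|^2 \geq c_{2,t} \cdot N^{-3/2}.$$
Taking square roots gives the normalized bound $\geq \sqrt{c_{2,t}} \cdot N^{-3/4}$, and multiplying through by $N$ converts this discrepancy of proportions into the corresponding deviation of the raw point count,
$$\left| \#(Z \cap B(x_0,t)) - N \sigma(B(x_0,t)) \right| \geq \sqrt{c_{2,t}} \cdot N^{1/4},$$
which is the asserted estimate with $c = \sqrt{c_{2,t}}$ (and matches the count-deviation form of Beck's theorem recalled in the introduction).

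I do not expect a genuine obstacle here, since all the analytic content is already carried by Theorem \ref{t.main}: the corollary is merely its specialization to $d = 2$ combined with the two explicit gegenbadly approximable heights furnished by Example \ref{ex:gegen}. The only points demanding care are bookkeeping ones, namely confirming that $d = 2$ satisfies the congruence condition and that $(d+1)/2 = 3/2$ is exactly the index appearing in Example \ref{ex:gegen}, and correctly tracking the factor of $N$ when moving between the averaged $L^2$ statement phrased in terms of proportions and the pointwise statement phrased in terms of the number of points captured by the cap.
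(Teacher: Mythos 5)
Your proposal is correct and takes essentially the same route as the paper, which derives the corollary exactly by specializing Theorem \ref{t.main} to $d=2$ (so $(d+1)/2 = 3/2$), invoking Example \ref{ex:gegen} for the two heights, and extracting a single favorable cap from the $L^2$ average via the fact that $\sigma$ is a probability measure. Your care in converting the normalized $L^2$ bound $N^{-3/4}$ into the raw count deviation $N^{1/4}$ by multiplying through by $N$ is the right reading of the statement's normalization.
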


We conclude with some related remarks.

\subsubsection{The condition $d \not \equiv 1 ~\mbox{mod}~4$}  This condition, which is equivalent to the condition that $\lambda$ is not an odd integer in Theorem \ref{thm:2},  is necessary for the existence of gegenbadly approximable numbers in view of Theorem \ref{thm:1}. It is also relatively easy to see that the statement of Theorem \ref{t.main} is false when $d=1$. The first open case that is not covered by our results is the case of $\mathbb{S}^5$. We notice that the same restriction $d \not \equiv 1 ~\mbox{mod}~4$ appeared in several results in analysis and discrepancy theory related to balls in $\mathbb R^d$: lattice points in balls \cite{KSS}, orthogonal exponentials on balls \cite{IR}, $L^2$ discrepancy with respect to balls \cite{CT}. In all of these results, this effect appeared due to a phase shift in the oscillatory term of  the asymptotic relations for the Bessel functions. In our case, it stems from a similar phase shift in the asymptotics of the Gegenbauer polynomials, see \eqref{e.KTz} and \eqref{e.NG}. 

\subsubsection{The `freak theorem' and uniform distribution} These single radius spherical cap discrepancy results are related to other interesting problems on the sphere. Consider the following two questions: assume a function $f \in C (\mathbb S^d)$ has mean zero on every spherical cap $B(x,t)$ of fixed size $t$, is it true that $f=0$? Assume that a sequence of points on the sphere $\mathbb S^d$ is uniformly distributed  with respect to every spherical cap $B(x,t)$ of fixed size $t$, is it true that it is a  uniformly distributed sequence? The answer to both questions turns out to be \textit{no}! The former has been answered by Ungar (\cite{ungar}, $d=2$) and Schneider (\cite{schneider}, $d\ge 3$) and is sometimes referred to as the `freak theorem', while the latter was dealt with by Vol\v{c}i\v{c} \cite{volcic}.  The exceptional values of $t\in (-1,1)$ in both cases are exactly the roots of Gegenbauer  polynomials $C_n^\lambda$ corresponding to dimension $d+2$, i.e. with $\lambda =(d+1)/2$ as in Theorem \ref{t.main}. Our result lies on the opposite end of the spectrum, as being a root  may be viewed as an extreme case of failing the gegenbadly approximable property. 

\subsubsection{Characterizing single radius spherical cap discrepancy} Theorem \ref{thm:2} characterizes gegenbadly approximable numbers: it characterizes $x \in (-1,1)$ for which the inequality $|C_n^{\lambda}(x)| \geq c_x \cdot  n^{\lambda-1}$ is true for all $n$ sufficiently large and it also quantifies `sufficiently large' turning it into an effective computable criterion. 
Theorem \ref{t.main} shows that gegenbadly approximable numbers admit an irregularities of distribution phenomenon with respect to a  spherical caps of fixed radius (provided that the corresponding height is $(d+1)/2-$gegenbadly approximable). The main purpose of Theorem \ref{t.main} is to show that such radii indeed exist, however, it is unlikely that it provides a characterization of such radii. This raises a natural question which, on $\mathbb{S}^2$, assumes the following form: Corollary \ref{cor:6} is true for $t \in \{1/2, 1/\sqrt{2}\}$, for which other values of $t$ does it hold? It seems likely that the condition $|C_n^{\lambda}(x)| \geq c_x \cdot  n^{\lambda-1}$ is only required to be true on average (in some suitable sense) and it is conceivable that Corollary \ref{cor:6} is true for many more values of $t$. We believe this to be an interesting question which is going to require a completely new idea.
The converse question, which can be viewed as a quantitative version of the aforementioned result of Vol\v{c}i\v{c} \cite{volcic}, is also interesting, but wide open: under which conditions on the radius there always exist discrete sets of $N$ points on the sphere such that their single radius spherical cap discrepancy is `small'?

\section{Proof of Theorem \ref{thm:1}}

 We use a result of Kalton \& Tzafriri \cite{kalton}: for any $0 < \lambda < \infty$, there exists a constant $c_{\lambda}$ such that, provided $n$ is sufficiently large, meaning $n \sin{\theta} \geq 1$,
\begin{equation}\label{e.KTz}
 \left| C_n^{\lambda}(\cos{\theta}) - \frac{n^{\lambda-1}}{2^{\lambda-1} \Gamma(\lambda)} \frac{1}{(\sin{\theta})^{\lambda}} \cos \left((n+\lambda)\theta - \lambda \frac{\pi}{2} \right) \right| \leq \frac{c_{\lambda}}{(\sin{\theta})^{\lambda +1}} n^{\lambda-2}.
 \end{equation}
This asymptotic expansion shows that it suffices to understand, for $n \geq (\sin{\theta})^{-1}$, whether there exists a constant $c>0$ (depending on $\lambda, \theta$) such that, uniformly for all $n$ sufficiently large
$$ \left| \cos \left((n+\lambda)\theta - \lambda \frac{\pi}{2} \right) \right| \geq c_{\theta, \lambda} > 0.$$
The cosine vanishes, $\cos {x} = 0$, for reals of the form $x  = (m+1/2) \pi$ where $m \in \mathbb{Z}$. Thus, by continuity and periodicity, our problem is equivalent to understanding whether there exists another constant $c^*_{\theta, \lambda}$ such that for all sufficiently large $n$,
$$ \inf_{m \in \mathbb{Z}} \left| (n+\lambda)\theta - \lambda \frac{\pi}{2} - \left(m + \frac{1}{2} \right)\pi \right| \geq c^*_{\theta, \lambda} > 0,$$
or equivalently,  whether there exists another constant $c^{**}_{\theta, \lambda} = c^*_{\theta, \lambda}/\pi  > 0$ such that for all sufficiently large $n$
$$ \inf_{m \in \mathbb{Z}} \left| (n+\lambda)\frac{\theta}{\pi} - \left(m + \frac{\lambda + 1}{2} \right) \right| \geq c^{**}_{\theta, \lambda} > 0.$$
It is convenient to denote the distance  from $x$ to the nearest integer  by $\| x \|$, i.e.
$$ \left\|x \right\| = \min_{k \in \mathbb{Z}} |x - k|.$$
Using this abbreviation, we are interested in studying
$$ \inf_{m \in \mathbb{Z}} \left| (n+\lambda)\frac{\theta}{\pi} - \left(m + \frac{\lambda + 1}{2} \right) \right| = \left\| (n+\lambda)\frac{\theta}{\pi} -  \frac{\lambda + 1}{2}  \right\| $$
Now we make a case distinction.

\subsection{$\lambda$ is an odd integer.}
If $\lambda$ is an odd integer, then $(\lambda+1)/2 \in \mathbb{N}$ and the question simplifies to whether there exists $ c_{\theta, \lambda}^{***} > 0$ such that for all $n \in \mathbb{N}$
$$ \inf_{n \in \mathbb{N}} \left\| (n+\lambda) \frac{\theta}{\pi}  \right\|  \geq c_{\theta, \lambda}^{***} > 0.$$
If $\theta/\pi = p/q \in \mathbb{Q}$ is a rational number, then it is clear that no such constant exists since the expression is 0 whenever $n + \lambda$ is a multiple of $q$. We can thus assume that $\theta/\pi$ is irrational. In that case, however, the sequence $(n+ \lambda)\theta/\pi~\mod 1$ is an irrational rotation on the torus and thus uniformly distributed and, in particular, gets arbitrarily close to 0 (meaning an integer). Indeed, we can make this more quantitative: by Dirichlet's pigeonhole principle we can conclude that
$$ \min_{1 \leq n \leq N} \left\| (n+\lambda) \frac{\theta}{\pi}  \right\| \lesssim \frac{1}{N}.$$
This implies that for infinitely many $n \in \mathbb{N}$
$$ \left| \cos \left((n+\lambda)\theta - \lambda \frac{\pi}{2} \right) \right| \lesssim \frac{1}{n}$$
from which we deduce that for infinitely many $n \in \mathbb{N}$
$$ \left| C_n^{\lambda}(\cos{\theta})\right| \lesssim n^{\lambda-2}.$$

\subsection{The case $\lambda = 1$}
 If $\lambda = 1$, then the previous argument already implies that
 $$ \left| C_n^{\lambda}(\cos{\theta})\right| \lesssim \frac{1}{n} \qquad \mbox{for infinitely many}~n.$$
We will now show that this result is sharp. When $\lambda=1$, the Gegenbauer polynomials $C^{1}_n$ simplify and
$ C_n^{1}(x) =  U_n(x)$,
where $U_n(x)$ denotes the Chebychev polynomials of the second kind.
Chebychev polynomials of the second kind satisfy
\begin{equation}\label{e.cheb}
U_n(\cos{(\theta)}) = \frac{\sin{((n+1)\theta)}}{\sin{(\theta)}}
\end{equation}
for $\theta \in (0,\pi)$.
We will now analyze the cases of $\theta/\pi$ being irrational and $\theta/\pi$ being rational. If $\theta/\pi= p/q$
is rational, then
$ \sin{\left( (n+1) \theta\right)} =  \sin{\left( \pi (n+1) p/q \right)}$
 cannot be bounded away from 0 and will assume the value $0$ infinitely many times at regularly spaced intervals. We will now argue that there exists a constant $c_{\theta} > 0$ such that
$$ \forall~n \in \mathbb{N}: \qquad \left| U_n(\cos{(\theta)})  \right| \geq \frac{c_{\theta}}{n}$$
if and only if $\theta/\pi$ is badly approximable.
Identity \eqref{e.cheb} 
shifts the question to whether $n\theta$ can ever be close to being a multiple of $\pi$.
This question is well studied in classical number theory and we see that we have
$$ \left\| \frac{n \theta}{\pi}  \right\| =  \inf_{m \in \mathbb{N}} \left| \frac{n \theta}{\pi} - m \right| \geq \frac{c_{\theta}}{n}$$
for some universal $c_{\theta} > 0$ if and only if $\theta/\pi$ is a badly approximable number.

\subsection{Proof of Corollary \ref{c.quadset}}\label{sec.proofcol}
\begin{proof}
If $x^2 = r \in \mathbb{Q}$ and $x$ is gegenbadly approximable, then, according to Theorem \ref{thm:1},  $\arccos{(\sqrt{r})}$ is a rational multiple of $\pi$. A result of Varona \cite{varona} (see also \cite{lehmer}) states this is  the case if and only if 
$ r \in \left\{ 0, 1/4, 1/2, 3/4, 1\right\}.$ If
$r=1$, then $x = \pm 1$ is not in $(-1,1)$.

 It is also easy to discard the case $x= 0 = \cos{(\pi/2)} = 0$, since in that case $p=1$ and $q=2$, and we see that 
 $$2n - 2 = \lambda(2p - q) + 2np - q = 2qm = 4m$$ 
 has a solution ($n=1$ and $m=4$), hence $x=0$ is not gegenbadly approximable. Thus \eqref{e.quadset} holds.

 Conversely, assume \eqref{e.quadset}. 
The cases $r=1/4$ and $r=1/2$, corresponding to $x= \pm 1/2$ and $x=\pm 1/\sqrt{2}$, have already been dealt with in Example \ref{ex:gegen}. It remains to deal with $x = \pm \sqrt{3}/2 = \pm \cos{(\pi /6)}$. The condition that remains to be checked is whether, for $p=1$ and $q=6$, the equation
$$ -4 \lambda + 2n - 6 = \lambda(2p - q) + 2np - q = 2qm = 12m $$
can avoid having a solution. Setting $\lambda = 4/3$, it is not hard to check that $\sqrt{3}/2$ is indeed $4/3-$gegenbadly approximable. This can be  be seen by computing $N_{\sqrt{3}/2, 4/3} \sim 21.7$ and checking the first 22 elements of the sequence.
\end{proof}

\section{Proof of Theorem \ref{thm:2}}
The proof comes in two parts: in the first part we show that any $\lambda-$gegenbadly approximable numbers has to necessarily have the desired form. In the second part, we will show that these properties are sufficient in the sense that $|C_n^{\lambda}(x)| \gtrsim n^{\lambda - 1}$ for all $n$ sufficiently large: the remaining problem is to quantify `sufficiently large'.

\subsection{Part 1: necessity.}
Let us now assume that $\lambda$ is not an odd integer and $x = \cos \theta$ is $\lambda-$gegenbadly approximable. Recalling the beginning of the proof of Theorem \ref{thm:1}, it remains to see whether there is a uniform bound
 $$ \inf_{n \in \mathbb{N}} \left\|  (n+\lambda) \frac{\theta}{\pi} - \frac{\lambda+1}{2} \right\| \geq  c_{\theta, \lambda}^{**}  > 0.$$
If $\theta/\pi$ is irrational, then $n \theta/\pi$ is dense modulo 1, and therefore so is $(n+\lambda) \theta/\pi$ and no such estimate can be true. We therefore deduce  that $\theta/\pi = p/q$ is rational, i.e. condition \eqref{cond1} of Theorem \ref{thm:2} holds. The problem thus simplifies to understanding whether there exists $c_{\theta, \lambda}^{**}  > 0$ such that 
$$ \inf_{n \in \mathbb{N}} \left\|  (n+\lambda) \frac{p}{q} - \frac{\lambda+1}{2} \right\| > c_{\theta, \lambda}^{**} > 0.$$
This is a $q-$periodic sequence and it suffices to understand whether
$$ \min_{1 \leq n \leq q} \left\|  (n+\lambda) \frac{p}{q} - \frac{\lambda+1}{2} \right\| > 0.$$
This minimum will be positive unless it vanishes somewhere which happens if there exists $1 \leq n \leq q$ and $m \in \mathbb{Z}$ such that
$$  (n+\lambda) \frac{p}{q} - \frac{\lambda+1}{2} = m$$
or, equivalently,  unless there exists $1 \leq n \leq q$ and $m \in \mathbb{Z}$ such that
$$ \lambda \left(\frac{p}{q} - \frac{1}{2}\right) + n \frac{p}{q} = m + \frac{1}{2}.$$
Multiplying both sides with $2q$ we see that this happens if and only if condition \eqref{cond2} in Theorem \ref{thm:2} is satisfied, i.e. for some $1 \leq n \leq q$ there exists an $m \in \mathbb{Z}$ such that
$$ \lambda \left( 2p -q \right) + 2 np = 2qm + q$$
which is equivalent to saying that
$$ \lambda \left( 2p -q \right) + 2 np -q \qquad \mbox{is divisible by}~2q~\mbox{for some}~1 \leq n \leq q.$$

\subsection{Part 2: sufficiency.}
If we now assume conditions \eqref{cond1}  and \eqref{cond2}  to hold, then we know that
$$ X = \inf_{1 \leq n \leq q} \left\|  (n+\lambda) \frac{p}{q} - \frac{\lambda+1}{2} \right\| > 0$$
since Condition \eqref{cond1} implies that $\theta/\pi$ is rational and condition \eqref{cond2} was derived to be equivalent to this statement.
Since $X$ denotes a distance to a nearest integer, we also have $0 \leq X \leq 1/2$.
 The function $\left|\cos{(x)}\right|$ is $\pi-$periodic and thus
\begin{align*}
 \left| \cos \left((n+\lambda)\theta - \lambda \frac{\pi}{2} \right)\right|  &=  \left|  \cos \left[ \pi \left( (n+\lambda)\frac{p}{q} -  \frac{\lambda + 1}{2} \right) + \frac{\pi}{2} \right] \right| \\
  &= \left| \cos \left[ \pi \left\| (n+\lambda)\frac{p}{q} -  \frac{\lambda + 1}{2} \right\| + \frac{\pi}{2} \right] \right| \\
  &= \sin\left( \pi \left\| (n+\lambda)\frac{p}{q} -  \frac{\lambda + 1}{2} \right\|  \right) \\
  &\geq \frac{2}{\pi} \left\| (n+\lambda)\frac{p}{q} -  \frac{\lambda + 1}{2} \right\|  \geq \frac{2}{\pi} X.
\end{align*}
We invoke an asymptotic expansion of Natanson--Glagovskii \cite{nat} who showed that
\begin{equation}\label{e.NG}
C_n^{\lambda}(\cos{\theta}) = \frac{A_n}{(\sin{\theta})^{\lambda}} \left( \cos{\left( (n+\lambda) \theta - \lambda \frac{\pi}{2} \right)} + \alpha_n(\theta) \frac{\lambda (1-\lambda) \cot \theta}{n+\lambda}  \right),
\end{equation}
where $|\alpha_n(\theta)| \leq 1$. It remains to check the validity of the inequality
$$  \frac{\lambda |1-\lambda| \cot \theta}{n+\lambda} < \frac{2}{\pi} X,$$ 
which is easily seen to be satisfied as soon as
$$ n \geq  \frac{\pi}{2}    \frac{\lambda |1-\lambda| \cot \theta }{X}.$$
This means that the Gegenbauer polynomial never vanishes and from the previous arguments we already know that it has the correct asymptotic growth as $n \rightarrow \infty$.
This establishes the desired result.

\section{Lower Bound for the Single Radius Spherical Cap Discrepancy: Proof  of Theorem \ref{t.main}}
\subsection{Preliminary Results} We first collect a number of relevant results. 
Throughout the remainder of this section, we will always set
$$ \lambda = \frac{d-1}{2}.$$
For any bounded function $f:[-1,1] \rightarrow \mathbb{R}$, we can define an associated notion of discrepancy for any set
$ Z = \left\{z_1, \dots, z_N \right\} \subset \mathbb{S}^d$ via
 via 
$$ \left[ D_{L^2, f}(Z) \right]^2 = \int_{\mathbb{S}^d} \left| \int_{\mathbb{S}^d} f(x \cdot y) d\sigma (y) - \frac{1}{N} \sum_{j=1}^{N} f(x \cdot z_j) \right|^2 d\sigma(x).$$
This notion can be defined for any function $f:[-1,1] \rightarrow \mathbb{R}$ but, naturally, some are easier to interpret than others. We will focus on the 
spherical cap discrepancy with fixed radius  which corresponds to the function
$$ f_t(\tau) = \mathbf{1}_{[t, 1]}(\tau),$$
where the parameter $t\in (-1,1)$ governs the size of the spherical cap $B(x,t)$ as defined in \eqref{e.cap}. 
Fourier Analysis on the sphere 
naturally leads to the Gegenbauer polynomial expansion. For any $\lambda \geq 0$ and $g \in L^1([-1,1], (1-x^2)^{\lambda - 1/2})$ we have 
$$ g(x) \sim \sum_{n=0}^{\infty} \widehat{g}(n,\lambda) \frac{n+\lambda}{\lambda} C_n^{\lambda}(x),$$
where $C_n^{\lambda}(x)$ are the Gegenbauer polynomials and $ \widehat{g}(n, \lambda)$ is the associated Fourier coefficient
$$  \widehat{g}(n,\lambda) = \frac{\Gamma(\lambda+1)}{\Gamma(\lambda + 1/2) \Gamma(1/2)} \frac{1}{C_n^{\lambda}(1)} \int_{-1}^{1} g(x) C_n^{\lambda}(x) (1-x^2)^{\lambda - 1/2}dx.$$
Having established the notation, we can now state our main ingredient.

\begin{thm}[Bilyk \& Dai \cite{bilyk}] Let $\lambda = (d-1)/2$. There exists a constant $c_d > 0$ depending only on the dimensions such that for all $f \in L^2_{w_{\lambda}}[-1,1]$  
and all $Z = \left\{z_1, \dots, z_N\right\} \subset \mathbb{S}^d$ we have
$$ {D}^2_{L^2, f}  (Z) \geq c_d \min_{1\leq k \leq c_d N^{1/d}} \left| \widehat{f}(k, \lambda)\right|^2.$$
\end{thm}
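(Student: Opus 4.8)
The plan is to expand the discrepancy function
$$F(x) = \frac{1}{N}\sum_{j=1}^N f(x\cdot z_j) - \int_{\mathbb{S}^d} f(x\cdot y)\,d\sigma(y)$$
in the orthogonal system furnished by the Gegenbauer expansion recalled above and apply Parseval. Writing $Z_n(x,y) = \frac{n+\lambda}{\lambda}C_n^{\lambda}(x\cdot y)$ for the degree-$n$ zonal kernel, the expansion $f(x\cdot z_j) = \sum_n \widehat f(n,\lambda)\,Z_n(x,z_j)$ shows that, since subtracting the mean removes exactly the $n=0$ term, the degree-$n$ component of $F$ (for $n\ge 1$) is $\widehat f(n,\lambda)\cdot\frac{1}{N}\sum_j Z_n(x,z_j)$. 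First I would record two standard facts about $Z_n$: the addition theorem $Z_n(x,y)=\sum_\ell Y_{n,\ell}(x)\overline{Y_{n,\ell}(y)}$, where $\{Y_{n,\ell}\}$ is an $L^2(\sigma)$-orthonormal basis of degree-$n$ spherical harmonics, and the reproducing identity $\int_{\mathbb{S}^d} Z_n(x,z_i)Z_n(x,z_j)\,d\sigma(x) = Z_n(z_i,z_j)$, both consequences of orthogonality across degrees.

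Combining these with Parseval yields the clean formula
$$D^2_{L^2,f}(Z) = \sum_{n=1}^{\infty}\bigl|\widehat f(n,\lambda)\bigr|^2 M_n, \qquad M_n := \frac{1}{N^2}\sum_{i,j=1}^N Z_n(z_i,z_j).$$
The structural point, again from the addition theorem, is that $M_n = \frac{1}{N^2}\sum_\ell\bigl|\sum_j Y_{n,\ell}(z_j)\bigr|^2 \ge 0$. Nonnegativity of every $M_n$ immediately gives, for any cutoff $K$,
$$D^2_{L^2,f}(Z) \ge \Bigl(\min_{1\le k\le K}\bigl|\widehat f(k,\lambda)\bigr|^2\Bigr)\sum_{n=1}^{K}M_n,$$
so the whole problem reduces to the configuration estimate $\sum_{n=1}^{K}M_n \gtrsim_d 1$ for a suitable $K\asymp c_d N^{1/d}$, with a constant independent of $Z$.

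To prove this lower bound I would introduce a nonnegative summability kernel: a polynomial $Q(t)=\sum_{n=0}^{K}a_n Z_n(t)$ of Jackson / de la Vallée-Poussin type on the sphere, chosen so that $Q(x\cdot y)\ge 0$ for all $x,y$, the coefficients satisfy $0\le a_n\le 1$ with $a_0$ a fixed normalizing constant, and $a_n\ge\tfrac12$ on a range $n\asymp K$. Because $Q$ is nonnegative, discarding the off-diagonal terms gives $\frac{1}{N^2}\sum_{i,j}Q(z_i\cdot z_j)\ge \frac{Q(1)}{N}$; on the other hand this double sum equals $\sum_{n=0}^{K}a_n M_n = a_0 + \sum_{n=1}^{K}a_n M_n$, using $M_0=1$. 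Since $0\le a_n\le 1$ and $M_n\ge 0$, this produces $\sum_{n=1}^{K}M_n \ge Q(1)/N - a_0$. Finally $Q(1)=\sum_n a_n Z_n(1)$ with $Z_n(1)=\frac{n+\lambda}{\lambda}C_n^{\lambda}(1)=\dim\mathcal H_n\asymp n^{d-1}$, so $Q(1)\gtrsim K^d$; taking $K=c_d N^{1/d}$ with $c_d$ large makes $Q(1)/N - a_0\gtrsim 1$, finishing the argument.

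The main obstacle is this last paragraph, and specifically the construction of $Q$: this is precisely where one pays for having thrown away the off-diagonal interaction terms, which can be negative for the sharp Dirichlet cutoff $\sum_{n\le K}Z_n$. The existence of a degree-$K$ kernel that is simultaneously pointwise nonnegative on $[-1,1]$, has coefficients in $[0,1]$, and retains full-order diagonal value $Q(1)\asymp K^d$ is the technical heart; such kernels are classical on the sphere, but checking all three properties with dimension-controlled constants requires care. Everything else—the Parseval identity, the reproducing property, and the count $\dim\mathcal H_n\asymp n^{d-1}$—is routine once the kernel is in hand.
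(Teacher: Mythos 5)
The paper does not actually prove this statement: it is imported as a black box from Bilyk--Dai \cite{bilyk}, so there is no internal proof to compare against. Your argument is essentially the standard proof underlying the cited reference, and it is sound. The Parseval/addition-theorem decomposition $D^2_{L^2,f}(Z)=\sum_{n\ge1}|\widehat f(n,\lambda)|^2 M_n$ with $M_n=\frac{1}{N^2}\sum_{i,j}Z_n(z_i,z_j)\ge0$ is correct; note that $\frac{n+\lambda}{\lambda}C_n^\lambda(1)=\dim\mathcal{H}_n$, so your $Z_n$ is precisely the reproducing kernel of the degree-$n$ spherical harmonics with respect to $\sigma$, which justifies both the addition theorem and the reproducing identity you invoke. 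The reduction to the configuration estimate $\sum_{n=1}^{K}M_n\gtrsim_d 1$ for $K\asymp N^{1/d}$, and the mechanism for proving it (diagonal terms versus pointwise nonnegativity), are exactly right. The one step you leave as a flagged hole --- existence of a zonal polynomial $Q=\sum_{n\le K}a_nZ_n$ with $Q\ge 0$ on $[-1,1]$, $0\le a_n\le 1$, $a_0$ bounded, and $Q(1)\gtrsim_d K^d$ --- is indeed available off the shelf: take the Ces\`aro $(C,\delta)$ kernel with coefficients $a_n=A^{\delta}_{K-n}/A^{\delta}_{K}$, where $A^{\delta}_{j}=\binom{j+\delta}{j}$ and $\delta=2\lambda+1=d$. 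Kogbetliantz's classical theorem gives pointwise nonnegativity of this kernel for $\delta\ge 2\lambda+1$; the coefficients decrease from $a_0=1$, so they lie in $[0,1]$; and $Q(1)\ge\sum_{n\le K/2}a_n\dim\mathcal{H}_n\gtrsim_d K^d$ since $a_n\ge c_d>0$ in that range. With that single citation supplied, your proof is complete, and it recovers the theorem by the same route as the original source rather than by a genuinely different one.
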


Our next ingredient is an identity for Gegenbauer polynomials that we found mentioned in a paper by R. Schneider \cite{schneider}. Since we were unable to find the identity anywhere else, we quickly supply the simple proof. The proof is based on the Rodrigues formula
$$ C_n^{\lambda}(t) = \frac{(-1)^n}{2^n n!} \frac{\Gamma(\lambda + 1/2) \Gamma(n+2\lambda)}{\Gamma(2\lambda) \Gamma(\lambda + n + 1/2)}  \cdot (1-t^2)^{-\lambda+1/2} \frac{d^n}{dt^n} \left[ (1-t^2)^{n + \lambda - 1/2} \right].$$
We will, for simplicity of exposition, abbreviate the numerical constant by $a_n^{\lambda}$. With this abbreviation, we can now evaluate a weighted integral over Gegenbauer polynomials over the interval $[\alpha, 1]$ in closed form.
\begin{lemma} \label{lem:schneider}
Abbreviating the constant $a_n^{\lambda} \in \mathbb{R}$ in the Rodrigues formula
$$C_n^{\lambda}(t) = a_n^{\lambda} ~ (1-t^2)^{-\lambda + 1/2} \frac{d^n}{dt^n} (1-t^2)^{n + \lambda - 1/2},$$
then, for all $-1 < \alpha < 1$,
$$ \int_{\alpha}^1 C_n^{\lambda}(t) (1-t^2)^{\lambda - 1/2} dt = -\frac{ a_n^{\lambda}}{ a_{n-1}^{\lambda + 1}} \cdot (1-\alpha^2)^{\lambda + 1/2} \cdot C_{n-1}^{\lambda + 1}(\alpha).$$
\end{lemma}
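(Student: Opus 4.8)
The plan is to prove the identity directly by integrating the Rodrigues formula and using the fundamental theorem of calculus. Starting from the stated Rodrigues representation
$$ C_n^{\lambda}(t) = a_n^{\lambda} \, (1-t^2)^{-\lambda + 1/2} \frac{d^n}{dt^n} (1-t^2)^{n + \lambda - 1/2}, $$
I multiply by the weight $(1-t^2)^{\lambda - 1/2}$ and observe that the weight cancels the prefactor $(1-t^2)^{-\lambda+1/2}$, leaving the clean expression
$$ C_n^{\lambda}(t)(1-t^2)^{\lambda - 1/2} = a_n^{\lambda} \, \frac{d^n}{dt^n} (1-t^2)^{n + \lambda - 1/2}. $$
Thus the integrand is an exact $n$-th derivative, and integrating from $\alpha$ to $1$ collapses one derivative via the fundamental theorem of calculus:
$$ \int_{\alpha}^1 C_n^{\lambda}(t)(1-t^2)^{\lambda-1/2}\,dt = a_n^{\lambda} \left[ \frac{d^{n-1}}{dt^{n-1}}(1-t^2)^{n+\lambda-1/2} \right]_{t=\alpha}^{t=1}. $$

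The next step is to check the boundary term at $t=1$ vanishes. Here I would note that $(1-t^2)^{n+\lambda-1/2}$ has a zero of high order at $t=1$, so each of its first $n$ derivatives still carries at least one factor of $(1-t^2)$ (more precisely a positive power, since $n+\lambda-1/2 - (n-1) = \lambda + 1/2 > 0$), and therefore the $(n-1)$-th derivative vanishes at $t=1$. This requires $\lambda + 1/2 > 0$, which holds for all $\lambda > 0$; I would mention this explicitly. After discarding the $t=1$ contribution, I am left with
$$ \int_{\alpha}^1 C_n^{\lambda}(t)(1-t^2)^{\lambda-1/2}\,dt = -\, a_n^{\lambda} \left[ \frac{d^{n-1}}{dt^{n-1}}(1-t^2)^{n+\lambda-1/2} \right]_{t=\alpha}. $$

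Finally, I would recognize the remaining derivative as a Gegenbauer polynomial with shifted parameters. Applying the Rodrigues formula again, this time with order $n-1$ and parameter $\lambda+1=\lambda'$, note that $(n-1)+\lambda'-1/2 = n+\lambda-1/2$, so the exponent matches exactly:
$$ C_{n-1}^{\lambda+1}(\alpha) = a_{n-1}^{\lambda+1}\,(1-\alpha^2)^{-(\lambda+1)+1/2}\,\frac{d^{n-1}}{dt^{n-1}}(1-t^2)^{n+\lambda-1/2}\Big|_{t=\alpha}. $$
Solving this for the derivative and substituting yields
$$ \int_{\alpha}^1 C_n^{\lambda}(t)(1-t^2)^{\lambda-1/2}\,dt = -\frac{a_n^{\lambda}}{a_{n-1}^{\lambda+1}}\,(1-\alpha^2)^{\lambda+1/2}\,C_{n-1}^{\lambda+1}(\alpha), $$
which is the claimed formula.

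The calculation is essentially mechanical; the one genuine point requiring care is the vanishing of the boundary term at $t=1$, since this is where the precise exponent bookkeeping matters and where the hypothesis $\lambda>0$ enters. The only other thing to verify is that the exponents in the two applications of the Rodrigues formula truly coincide, i.e. that lowering the polynomial degree by one and raising $\lambda$ by one leaves $n+\lambda-1/2$ invariant — an easy but essential check that makes the identity work so cleanly.
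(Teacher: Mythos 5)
Your proof is correct and is essentially the same argument as the paper's: both hinge on applying the Rodrigues formula twice and on the exponent bookkeeping $(n-1)+(\lambda+1)-\tfrac12 = n+\lambda-\tfrac12$, the only difference being that you integrate directly via the fundamental theorem of calculus while the paper verifies the identity in reverse (matching $\alpha$-derivatives of both sides plus the trivial agreement at $\alpha=1$). The one small extra step your direction requires, namely that the $(n-1)$-st derivative of $(1-t^2)^{n+\lambda-1/2}$ vanishes at $t=1$ because every term retains a factor $(1-t^2)^{\lambda+1/2}$, you handle correctly.
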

\begin{proof} The proof is straightforward: we first simplify the expression on the left-hand side and then differentiate to argue that it leads to the right-hand side. It is easy to see that both sides of the equation coincide when $\alpha = 1$ (since both the left-hand side and the right-hand side vanish in that case) and so the result follows.
We start by arguing that appealing to the Rodrigues formula
\begin{align*}
   \frac{1}{a_{n-1}^{\lambda + 1}} C_{n-1}^{\lambda + 1}(t) &=     \frac{1}{a_{n-1}^{\lambda + 1}} a_{n-1}^{\lambda + 1 } (1-t^2)^{-\lambda - 1/2} \frac{d^{n-1}}{dt^{n-1}} (1-t^2)^{(n-1) + (\lambda +1) - 1/2} \\
   &=  (1-t^2)^{-\lambda - 1/2} \frac{d^{n-1}}{dt^{n-1}} (1-t^2)^{n + \lambda - 1/2}.
\end{align*}
Therefore
\begin{equation} \label{need}
  - a_n^{\lambda} (1-t^2)^{\lambda + 1/2} \frac{1}{a_{n-1}^{\lambda + 1}}  C_{n-1}^{\lambda + 1}(t) = - a_n^{\lambda}  \frac{d^{n-1}}{dt^{n-1}} (1-t^2)^{n + \lambda - 1/2}.
  \end{equation}
We will now use this identity to prove this Lemma. We have already established the validity when $\alpha = 1$. Differentiating the left-hand side
$$ \frac{\partial}{\partial \alpha}  \int_{\alpha}^1 C_n^{\lambda}(t) (1-t^2)^{\lambda - 1/2} dt = - C_n^{\lambda}(\alpha) (1-\alpha^2)^{\lambda - 1/2}.$$
Differentiating the right-hand side in the statement of the Lemma becomes much simpler after using \eqref{need} since
\begin{align*}
\frac{\partial}{\partial \alpha}  \left[ -\frac{ a_n^{\lambda}}{ a_{n-1}^{\lambda + 1}} \cdot (1-\alpha^2)^{\lambda + 1/2} \cdot C_{n-1}^{\lambda + 1}(\alpha) \right] &=
\frac{\partial}{\partial \alpha}  \left[   - a_n^{\lambda}  \frac{d^{n-1}}{d\alpha ^{n-1}} (1-\alpha^2)^{n + \lambda - 1/2} \right] \\
&= - a_n^{\lambda}    \frac{d^{n}}{d\alpha ^{n}} (1-\alpha^2)^{n + \lambda - 1/2} 
\end{align*}
Checking whether both derivatives are the same is exactly equivalent to the Rodrigues formula
$$ C_n^{\lambda}(t) =  a_n^{\lambda} (1-t^2)^{-\lambda + 1/2}   \frac{d^{n}}{dt^{n}} (1-t^2)^{n + \lambda - 1/2}.$$
\end{proof}

\subsection{Proof of Theorem \ref{t.main}}
\begin{proof} We can now combine all the different ingredients. The single radius spherical cap discrepancy corresponds to the choice
$$ f_t(\tau) = \chi_{[t,1]}(\tau).$$
We start by computing its Gegenbauer coefficients 
$$  \widehat{f_t}(n;\lambda) = \frac{\Gamma(\lambda+1)}{\Gamma(\lambda + 1/2) \Gamma(1/2)} \frac{1}{C_n^{\lambda}(1)} \int_{t}^{1} C_n^{\lambda}(x) (1-x^2)^{\lambda - 1/2}dx.$$
The leading term is comprised of a constant depending only on $\lambda$ and $1/C_n^{\lambda}(1)$. It is known, see Abramowitz \& Stegun \cite[Equation 22.14.2]{ab}, that
$$ C_n^{\lambda}(1) = 
\binom{n+2\lambda - 1}{n} \lesssim n^{2\lambda -1}$$
 with a constant again  depending only on $\lambda$.
Using Lemma \ref{lem:schneider}, we deduce 
\begin{align*}
  \left| \widehat{f_t}(n,\lambda) \right|  &\gtrsim_{\lambda} \frac{1}{n^{2\lambda -1}}  \left| \int_{t}^{1} C_n^{\lambda}(x) (1-x^2)^{\lambda - 1/2}dx \right| \\
  &= \frac{1}{n^{2\lambda -1}}  \left| a_n^{\lambda} (a_{n-1}^{\lambda + 1})^{-1} (1-t^2)^{\lambda + 1/2} C_{n-1}^{\lambda + 1}(t) \right|.
  \end{align*}
  Some further simplification is in order. The Rodrigues formula, and thus the coefficients $a_n^{\lambda}$, are known and, ignoring constants that depend only on $\lambda$,
  \begin{align*}
 a_n^{\lambda}  &= \frac{(-1)^n}{2^n n!} \frac{\Gamma(\lambda + 1/2) \Gamma(n+2\lambda)}{\Gamma(2 \lambda) \Gamma(\lambda + n +1/2)} \sim_{\lambda}  \frac{(-1)^n}{2^n n!} n^{\lambda-1/2}
\end{align*}
Therefore,
$$ \left| - a_n^{\lambda} (a_{n-1}^{\lambda + 1})^{-1} (1-t^2)^{\lambda + 1/2}  \right| \sim_{\lambda,t} \frac{1}{n^2}$$
and the lower bound simplifies to
$$   \left| \widehat{f_t}(n,\lambda) \right|  \gtrsim_{\lambda,t} \frac{1}{n^{2\lambda + 1}} \left| C_{n-1}^{\lambda + 1}(t)  \right|.$$
Assuming that $t$ is $(\lambda+1)$-gegenbadly approximable (notice that $\lambda +1 = (d+1)/2$), we arrive at 
$$ \left| C_{n-1}^{\lambda + 1}(t) \right| \geq c_{t}  n^{\lambda}.$$
Putting this all together, we find that   
$$ \left|  \widehat{f_{t}}(n, \lambda) \right| \gtrsim_{t, \lambda}  \frac{1}{n^{\lambda +1}}.$$
Then
$$ 
|\widehat{f_{t}}(n,\lambda)|^2 \gtrsim_{t, \lambda} \frac{1}{n^{2\lambda + 2}} = \frac{1}{n^{d+1}}.$$
Finally, appealing to the Theorem of Bilyk \& Dai, we complete the proof: 
$$ D^2_{L^2, f_t}  (Z) \geq c_d \min_{1\leq k \leq c_d N^{1/d}} |\widehat{f_{t}}(n,\lambda)|^2 \gtrsim \frac{1}{N^{1 + \frac{1}{d}}},$$
\end{proof}

We  conclude with a quick analysis of what this argument gives on $\mathbb{S}^1$. Theorem \ref{thm:2} is not applicable since $d \equiv 1~(\mbox{mod}~4)$, however, Theorem \ref{thm:1} can still be applied. Since $d=1$, we obtain $\lambda = (d-1)/2 = 0$ and $\lambda +1 = 1$, hence   Theorem \ref{thm:1} guarantees existence of suitable $-1 < t< 1$ such that
$$   \left| \widehat{f_t}(n, \lambda) \right|  \gtrsim_{\lambda} \frac{1}{n^{2\lambda + 1}} \left| C_{n-1}^{\lambda + 1}(t)  \right| \gtrsim \frac{1}{n^2}.$$
Then 
$$ D^2_{L^2, f_t}  (Z) \geq c_d \min_{1\leq k \leq c_d N^{1/d}} |\widehat{f_{t}}(n,\lambda)|^2 \gtrsim \frac{1}{N^4},$$
This is far from optimal: it is not too difficult to see that the single spherical cap $L^2-$discrepancy on $\mathbb{S}^1$ can be as small as $\sim 1/N$ but not smaller than that. This rate is attained for equispaced points. 
We conclude with a basic heuristic: if $d = 4k + 1$, then $\lambda = (d-1)/2 = 2k$ and $\lambda+ 1 = 2k+1$ is an odd integer, then  Theorem \ref{thm:1} states that $$  \left| C_n^{\lambda + 1}(x) \right| \leq c_x \cdot  n^{\lambda - 1} \qquad \mbox{for infinitely many}~n.$$
If one were to assume the existence of a converse bound, meaning special values $-1<x<1$ such that $\left| C_n^{\lambda + 1}(x) \right| \gtrsim c_x n^{\lambda -1}$, then this would imply the weaker lower bound
$   | \widehat{f_t}(n;\lambda) |   \gtrsim 1/n^{\lambda+2} $
and 
$$ \mathcal{D}^2_{L^2, f_{\alpha}, N} \gtrsim \frac{1}{N^{1 + \frac{3}{d}}}.$$
We do not know whether this is true; however, if true, it would be strongest result that can possibly be obtained with this method. The first open case is $\mathbb{S}^5$.\\

\textbf{Acknowledgments.}
The authors were supported by the NSF DMS-2054606 (DB, MM) and DMS-2123224 (SS). They are grateful to Giacomo Gigante for spotting an error in a preliminary version of the manuscript.

\end{document}